\colorlet{shadecolor}{orange!15}
\theoremstyle{definition}
\newtheorem{thm}{Theorem}[section]
\newtheorem{defn}[thm]{Definition}
\newtheorem{prop}[thm]{Proposition}
\newtheorem{lem}[thm]{Lemma}
\newtheorem{rem}[thm]{Remark}
\newtheorem{exam}[thm]{Example}
\newtheorem{asu}{Assumption}
\newcounter{subassumption}[asu]
\renewcommand{\p@subassumption}{\theasu}
\def\pp#1{ \left(#1\right) }
\def\pc#1{ \left\{#1\right\} }
\newcommand{\cl}{\mathcal}
\newcommand{\bb}{\mathbb}
\newcommand{\wt}{\widetilde}
\newcommand{\E}{\bb{E}}
\begin{document}


\thispagestyle{empty}

\begin{center}
{\LARGE \bf   Empirical limit theorems for Wiener  chaos  }\\
\end{center}
\begin{center}
    Shuyang Bai, Jiemiao Chen\footnote{The authors are ordered alphabetically and contributed equally to this work.}
\end{center}
\begin{abstract}
{
We consider empirical measures in a triangular array setup with underlying distributions varying as sample size grows. We study asymptotic properties of multiple integrals with respect to normalized empirical measures. Limit theorems involving series of multiple Wiener-It\^o integrals are established.
}


\end{abstract}
 \textit{Keywords}: Limit theorems, Wiener chaos, multiple stochastic integrals, empirical measure, Gaussian random measure 

Mathematics Subject Classification (2020) 60F05 (1st); 60H05 (2nd)
{
\section{Introduction}\label{s0}

%
Let $\{X_j\}$ be independent and identically distributed (i.i.d.) random elements taking value in a measurable space $(E,\cl{E})$ with distribution $P_0$.  The \emph{empirical measure}  $\widehat{P}_n=\frac{1}{n}\sum_{j=1}^n \delta_{X_j}$, where $\delta_{(\cdot)} $  denotes the Dirac delta measure and $n\in \bb{N}_+=\{1,2,\ldots\}$ is the sample size, is a well-studied object in probability and statistics  (see, e.g., \cite{vaart1997weak}).    In particular,  it is well-known that the normalized set-indexed empirical process 
$
\pp{\sqrt{n} \pp {\widehat{P}_n-P_0}(B)}_{B\in \cl{E}} ,$ 
 converges in finite-dimensional distributions to a set-indexed  zero-mean Gaussian process $\pp{G(B)}_{B\in \cl{E}}$ with covariance
 \begin{equation}\label{eq:Brown bridge cov}
 \E G(B_1)G(B_2)= P_0(B_1\cap B_2)-P_0(B_1)P_0(B_2),\quad  B_1,B_2\in \cl{E}. 
 \end{equation} Observe that $G(B_2)$ and $G(B_1)$, for $B_1,B_2\in \cl{E}$ with positive $P_0$ measures are negatively correlated  if $B_1\cap B_2=\emptyset$.  We shall refer to $G$ as a \emph{Brownian bridge random measure}.

On the other hand,  another well-known class of set-indexed Gaussian processes is the so-called \emph{Brownian random measure}. Suppose $\mu$ is a (possibly infinite) measure on $(E,\cl{E})$, which we refer to as the \emph{control measure}. Then a Brownian random measure with control measure $\mu$ is a set-indexed zero-mean Gaussian process $\pp{W(B)}_{B\in \cl{E}_0}$, 
$
\mathcal{E}_0=\{B \in \mathcal{E}: \mu(B)<\infty\},
$
with covariance $\E W(B_1)W(B_2)= \mu(B_1\cap B_2)$, $B_1,B_2\in \cl{E}_0$. Unlike the Brownian bridge random measure $G$, the Brownian random measure $W$ is \emph{independently scattered}: if $B_1, \ldots, B_n \in \mathcal{E}_0$ are pairwise disjoint, then $W\left(B_1\right), \ldots, W\left(B_n\right)$ are independent. Brownian random measure can be viewed as a way to make mathematical sense of the notion of Gaussian white noise and plays important roles in stochastic analysis (e.g., \cite{nualart2006malliavin}) and beyond. We note that although the term ``random measure'' is used to address $G$ and $W$, generally neither of them admits realizations  almost surely as  signed measures of finite variations (see, e.g., \cite{bryc1995gaussian}).

Several articles \cite{ivanoff1982central,ossiander1985levy,alexander1986uniform,bierme2014invariance}   have considered limit theorems that start with an empirical-like random measure and end up with a Brownian random measure $W$ as the limit,
although none of them considered exactly empirical measures. As already mentioned above, a properly normalized $\widehat{P}_n$ leads to a Brownian bridge random measure $G$ with covariance in \eqref{eq:Brown bridge cov} that typically does not vanish for disjoint subsets $B_1$ and $B_2$. In this paper, however, we shall consider a triangular array scheme: let the distribution $P_0$ be replaced by a sequence of changing distributions $P_n$, $n\in \bb{N}_+$, and impose a certain condition (see Assumption \ref{asu2} below), which ensures that $P_n(B_1)P_n(B_2)$ decays to zero at a faster rate than $P_n(B_1\cap B_2)$. This creates an asymptotic uncorrelatedness of the empirical measures evaluated at disjoint $B_1$ and $B_2$.  Under such a scheme, a properly normalized empirical measure, denoted $W_n$, will converge in finite-dimensional distributions to a Brownian random measure $W$.

The main goal of this paper is to study multiple empirical integrals, each of the form:
 \begin{equation}\label{eq I_n}
I_k^{(n)}(f): = \int_{E^k}^{\prime} f\left(u_1, \ldots, u_k\right) W_n\left(d u_1\right) \ldots W_n\left(d u_k\right), \quad k\in \bb{N}_0: = \{0,1,2,\ldots \},
\end{equation}
for suitable integrand $f$, where the prime $'$ means the exclusion of the diagonals $u_i=u_j$, $i\neq j$, from the multiple integral.  Such types of multiple empirical integrals have been considered by Major \cite{major2005tail, major2005tail1, major2005estimate,major2006estimate, major2013estimation} and Boistard and del Barrio \cite{boistard2009central} with a fixed distribution $P_n\equiv P_0$.    
Our results show that as $n\rightarrow\infty$, the multiple empirical integral $I_k^{(n)}(f)$ defined by our $W_n$ with changing distributions $P_n$ converges weakly to a corresponding multiple integral with respect to a Brownian random measure $W$, that is, a \emph{multiple Wiener-It\^o integral} (\cite{ito1951multiple,major2014multiple}; see also Section \ref{msi} below) denoted as $I_k(f)$.  Limit theorems or approximations for multiple stochastic integrals in similar spirits have also been considered in \cite{budhiraja1997two,jeon2004wong,tudor2007approximation,bardina2009convergence,bardina2010approximation,shen2012convergence,sun2017weak}.

More generally, our main results, Theorem \ref{thm6} and Proposition \ref{prop4}, concern limit theorems for $\sum_{k=0}^{K_n} I_k^{(n)}(h^{(k)})$ with suitable integrands $h^{(k)}$ defined on $E^k$, $k\in \bb{N}_0$, where $K_n$ is an integer sequence that grows to $\infty$ as $n\rightarrow\infty$. 
The limit is a series of multiple Wiener-It\^o integrals $\sum_{k=0}^{\infty} I_k(h^{(k)})$, known as a \emph{Wiener chaos} expansion. A key step to establish these limit theorems involving the growing truncation $K_n$ is an asymptotic analysis of the second-order moment properties of the multiple empirical integrals, which may be of independent interest; see Proposition \ref{prop2} and Section \ref{sec:pf prop}.
Limit theorems involving a  growing truncation and an infinite series of multiple Wiener-It\^o integral integrals have also appeared in the study of degenerate U-statistics. See, e.g.,  \cite{dynkin1983symmetric, mandelbaum1984invariance}.  The mechanism by which a Brownian random measure arises in these U-statistics limit theorems is, however, different: it is due to a cancelation effect from a degeneracy condition on the integrand. 
See \cite{boistard2009central} for clarification.

The rest of paper is organized as follows. We introduce the preliminaries in Section \ref{s2}, present the main results in Section \ref{s3}, and provide proofs in Section \ref{s4}.

\section{Preliminaries}\label{s2}
\subsection{Notations and basic assumptions}\label{s1}
Throughout the paper, $|A|$ denotes the cardinality of a set $A$. 
Suppose $a_n, b_n$ are two real-valued sequences indexed by $n\in \bb{N}_+$. Then $a_n \sim b_n$  means $\lim_{n\rightarrow \infty}a_n/ b_n = 1$, $a_n =  o\left(b_n\right)$ means that $\lim_{n\rightarrow\infty} a_n/b_n = 0$, $a_n = O\left(b_n\right)$ means there exists a constant $C>0$ which does not depend on $n$,   such that $|a_n|\leq C |b_n|$. Also, $o_n(1)$ stands for a sequence that tends to zero as $n\rightarrow\infty$,  $a_n\nearrow a$ if $a_n\le a_{n+1}$ and $\lim_n a_n=a$ with $a$ being a constant. For a measure $m$ and  a non-negative or integrable function $f$, $m(f)$ denotes the integral $\int f dm$,  and $L^p(m)$, $p>0$, denotes the $L^p$ space of functions with respect to the measure $m$. Write $m^k$, $k\in \bb{N}_+$, the product measure on the $k$-product space formed by the original space on which $m$ is defined. In particular, for a function $g$ defined on the product space, $m^k(g)$ stands for the integral with respect to $m^k$ (instead of the integral value $m(g)$ to the power $k$). $\bb{P}$ denotes the  probability measure of the underlying probability space. The arrow $\Rightarrow$ stands for weak convergence, and $\overset{d}{=}$ stands for equality in distribution. Besides, we let $C$, $C_i$'s, $i=0,1,2$ be generic positive constants whose value may change from expression to expression.

Now we introduce the setup we shall follow in the rest of the paper. Suppose  for each $n \in \bb{N}_+$  that $\left\{X_{n, j},\ j \in \bb{N}_+ \right\}$ are  i.i.d.\ random elements taking value in a measurable space $(E, \mathcal{E})$. We denote the common distribution of $X_{n,j}$ by $P_n$ for fixed $n$, i.e. $P_n( \, \cdot\, )= \bb{P}\left(X_{n, 1} \in \cdot\,\right)$; thus each $P_n$ is a probability measure on $(E,\cl{E})$. 
 Let $\mathcal{E}_0$ be the sub-collection of $\mathcal{E}$ defined by $\mathcal{E}_0=\{B \in \mathcal{E}: \mu(B)<\infty\}.$ 
We first introduce the assumption that will be used later as follows. 
\begin{asu}\label{asu2} Assume
    $(E,\mathcal{E}, \mu)$ is a non-atomic $\sigma$-finite measure space with  $\mu(E)=\infty$. Suppose a positive sequence $a_n \rightarrow \infty$, $a_n=o(n)$, as $n\rightarrow\infty$. Write $\mu_n =  (n/a_n)P_n$.  Assume that $\mu_n \leq \mu_{n+1}$ and $\lim_{n \rightarrow \infty} \mu_n = \mu$, where the two relations hold setwise on $\mathcal{E}$.
\end{asu}
\begin{rem}
Assumption \ref{asu2} implies that $P_n(B)\rightarrow 0$ as $n\rightarrow\infty$ for any $B\in \mathcal{E}_0$.
\end{rem}
We provide below a construction of $P_n$ satisfying Assumption \ref{asu2}.
\begin{exam}\label{exam1}
 Suppose a sequence of non-decreasing subsets $E_n\in \cl{E}_0$  such that $\mu(E_n)>0$,  $\bigcup_{n\ge 1} E_n = E$ and $\mu(E_n)=o(n)$ as $n\rightarrow\infty$.  Note that the last requirement is always possible since $\mu$ is non-atomic.  Then set $ a_n = \frac{n}{\mu\left(E_n\right)}$.   Define probability measures $P_n$ on $(E,\cl{E})$ by $P_n(B)  = \frac{\mu(E_n \cap B)}{\mu(E_n)}= \frac{a_n}{n} \mu(E_n \cap B) $, $B\in \cl{E}$.
\end{exam}

\subsection{Multiple stochastic integrals}\label{msi}

First, we briefly review multiple Wiener-It\^o integrals,  and refer  to  \cite[Section 1.1.2]{nualart2006malliavin} and \cite[Appendix B]{pipiras2017long} for more details.
Suppose $W$ is a  Brownian random measure on $(E, \mathcal{E})$ with non-atomic control measure $\mu$. A so-called  multiple Wiener-It\^o integral with respect to $W$ is  written as
\begin{equation}\label{eq35}
 \int_{E^k}^{\prime} f\left(u_1, \ldots, u_k\right) W\left(d u_1\right) \ldots W\left(d u_k\right)=: I_k(f),   
\end{equation}
where $f\in L^2(\mu^k)$, $k\in \bb{N}_0$.  In particular, we understand a function without variables, that is, $f\in L^2(\mu^0)$, as a finite constant, say $c\in \bb{R}$, for which we set $I_0(f)=c$. We also understand the notation $\mu^0(f)=c$ in this case. The prime in $\int_{E^k}^{\prime}$ refers to the fact that integration excludes the diagonals $\{u_i= u_j,\ i \neq j\}$.

For $k\ge 1$, the integral (\ref{eq35}) is first defined for the class of simple functions on $E^k$ denoted as $\mathcal{S}^k$. Here, a function $f$: $E^k \mapsto \mathbb{R}$ is said to be simple, i.e., $f\in \cl{S}^k$, if it has the form
\begin{equation}\label{simpdef}
f\left(u_1, \ldots, u_k\right)=\sum_{i_1, \ldots, i_k=1}^t a_{i_1, \ldots, i_k} \mathbb{I}_{A_{i_1} \times \ldots \times A_{i_k}}\left(u_1, \ldots, u_k\right),
\end{equation}
where $A_1, \ldots, A_t \in \mathcal{E}_0$, $t\in \bb{N}_+$, are pairwise disjoint, and  $a_{i_1, \ldots, i_k}$ are real-valued coefficients whose value is  zero if any two of the indices $i_1, \ldots, i_k$ are equal.  The last property ensures that the function $f$ vanishes on the diagonals $\{u_i=u_j:\ i \neq j\}$. For such $f$, the multiple integral $I_k(f)$ is defined by replacing the indicator in \eqref{simpdef} with the product $W(A_{i_1})\ldots W(A_{i_k})$.  
The definition of $I_k(f)$ can  be then extended to a function  $f \in L^2\left( \mu^k\right)$ through an $L^2$-approximation with simple functions.   For $k=0$,  $f\in \cl{S}^0$ is understood as a finite constant.

We mention some important properties of multiple Wiener-It\^o integrals.  For $f, g \in L^2(\mu^k)$, $k\in \bb{N}_0$ and $a, b \in \mathbb{R}$, we have almost surely
$
I_k(a f+b g)=a I_k(f)+b I_k(g),
$
and
$
I_k(f)=I_k(\wt{f}),
$
where $\wt{f}$ denotes the symmetrization of $f$ defined by
\begin{equation}\label{eq37}
 \wt{f}\left(u_1, \ldots, u_k\right)=\frac{1}{k !} \sum_\sigma f\left(u_{\sigma(1)}, \ldots, u_{\sigma(k)}\right),   
\end{equation}
with the sum over all permutations $\sigma$ of $\{1, \ldots, k\}$.   Moreover, for   $f\in L^2(\mu^{k_1})$, $g \in L^2(\mu^{k_2})$,  $k_1,k_2\in \bb{N}_0$, we have 
\begin{align}\label{eq38}
\begin{gathered}
\mathbb{E} I_{k_1}(f) I_{k_2}(g)=\left\{\begin{array}{cc}
0, & \text { if } k_1 \neq k_2; \\
k !\langle\widetilde{f}, \widetilde{g}\rangle_{L^2\left( \mu^k\right)}, & \text { if } k_1=k_2=:k ,
\end{array}\right.
\end{gathered}    
\end{align}
where $\langle\cdot , \cdot \rangle_{L^2\left( \mu^k\right)}$ stands for the $L^2$ inner product.
Note that  when $k_1=k\ge 1$ and $k_2=0$,  the relation  essentially says $\mathbb{E}I_{k}(f)= 0$. When $k_1=k_2=k$, and $f=g$ above, it follows from (\ref{eq37}) and a triangular inequality that
\begin{equation}\label{eq:mwi 2nd moment}
\mathbb{E} I_k(f)^2=k !\|\wt{f}\|_{L^2\left( \mu^k\right)}^2 \leq k !\|f\|_{L^2\left( \mu^k\right)}^2 .
\end{equation}
 Below we introduce a space  describing the integrands of Wiener chaos of possibly infinite order.
\begin{defn}\label{defnofH}
Let $H$ stand for the set of  $h=\left(  h^{(0)}, h^{(1)}\left(x_1\right), h^{(2)}\left(x_1,  x_2\right),...\right)$ where $h^{(k)} \in L^2(\mu^k)$, $k\in \bb{N}_0$, which satisfies
\begin{equation}\label{eq:H norm}
\|h\|_{H}^2:=\sum_{k=0}^{\infty} k! \|\wt{h}^{(k)}\|^2_{L^2\left( \mu^k\right)}<\infty,
\end{equation}
where $\wt{h}^{(k)}$ is the symmetrization of  $h^{(k)}$.
\end{defn}
For $h\in H$,   the  Wiener chaos series expansion $\sum_{k=0}^\infty I_k(h^{(k)})$ is well defined in $L^2(\mathbb{P})$.

Next,  we turn to the multiple empirical integral in \eqref{eq I_n}. With the sequence $a_n$ as in Assumption \ref{asu2}, we define the normalized empirical measure  $W_n:=\left(W_n(B)\right)_{B \in \mathcal{E}}$ as follows
\begin{equation}\label{eq3}
    W_n(B) = \frac{1}{\sqrt{a_n}} \sum_{i=1}^n \left[ \delta_{X_{n,i}}(B)-P_n(B)  \right].
\end{equation}
 Since each realization of $W_n$ in \eqref{eq3} is a signed measure with bounded total variation, 
 the  expression (\ref{eq I_n})     can be  realization-wise understood as a multiple integral (excluding the diagonals) with respect to the signed measure $W_n$  (cf., e.g., \cite[P.182]{bogachev2007measure}).  Under  $f \in L^1(P_n^k)$,  one can verify that the integrability of the multiple integral (\ref{eq I_n})    holds almost surely.  
Note that if $f\in L^2(\mu^k)$ and $(n/a_n)P_n \leq \mu$ as imposed in Assumption \ref{asu2}, we deduce $f \in L^2(P_n^k)$, which further implies $f \in L^1(P_n^k)$.  We follow a similar convention as above to understand the integrals of the zeroth order $I_0^{(n)}(f)$ and $P_n^{0}(f)$ as a constant.

\section{Main results}\label{s3}

 First, we present a result that can be viewed as an asymptotic version of the relation \eqref{eq38}.

\begin{prop}\label{prop2}
 For $f \in L^2\left(  \mu^{k_1}\right)$, $g \in L^2\left( \mu^{k_2}\right)$,   $k_1, k_2\in \bb{N}_0$, we have as $n \rightarrow \infty$,
\begin{align}
\begin{gathered}
\mathbb{E}\left(I_{k_1}^{(n)}\left(f\right) I_{k_2}^{(n)}\left(g\right)\right) \rightarrow \left\{\begin{array}{cc}
0, & \text { if } k_1 \neq k_2; \\
k !\langle\widetilde{f}, \widetilde{g}\rangle_{L^2\left( \mu^k\right)}, & \text { if } k_1=k_2=:k .
\end{array}\right.
\end{gathered}    
\end{align}

\end{prop}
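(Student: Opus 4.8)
The plan is to expand the empirical multiple integrals directly in terms of the i.i.d.\ sample $\{X_{n,i}\}$ and to organize the resulting double sum by \emph{diagrams}, carefully tracking the two scales $n$ and $a_n/n$ hidden in $P_n=(a_n/n)\mu_n$. Set $Z_{n,i}:=\delta_{X_{n,i}}-P_n$, so that $W_n=a_n^{-1/2}\sum_{i=1}^{n}Z_{n,i}$ by \eqref{eq3}, and the $Z_{n,i}$ are i.i.d.\ and ``centered'' in the sense that $\E\!\int\phi\,dZ_{n,i}=0$ for $\phi\in L^1(P_n)$. Since each realization of $W_n$ is a signed measure of finite variation, multilinearity of the off-diagonal integral gives
\[
I_k^{(n)}(f)=\frac{1}{a_n^{k/2}}\sum_{\mathbf i\in\{1,\dots,n\}^{k}}\ \int_{E^{k}}^{\prime}f\;d\bigl(Z_{n,i_1}\otimes\cdots\otimes Z_{n,i_k}\bigr).
\]
Multiplying the analogous expansions for $f$ and $g$, taking expectations, and grouping the pair of multi-indices by the partition $\sigma$ of the combined coordinate set $\{1,\dots,k_1\}\sqcup\{1',\dots,k_2'\}$ that records coincidences of indices, one sees by independence across $i$ and the centering of $Z_{n,i}$ that any $\sigma$ with a singleton block contributes $0$ (integrate that lone coordinate against $Z_{n,i}$ and average over $X_{n,i}$). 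Hence only $\sigma$ whose blocks all have size $\ge 2$ matter, so $|\sigma|\le(k_1+k_2)/2$.

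The core of the argument is the power counting for the surviving $\sigma$. A given $\sigma$ comes from $n(n-1)\cdots(n-|\sigma|+1)\sim n^{|\sigma|}$ index assignments. For each block $B$, after discarding the within-integral sub-diagonals — this is where non-atomicity of $\mu$, hence of $P_n$, is used: $\delta_{X}\otimes\delta_{X}$ sits on the diagonal and is removed \emph{inside one integral}, but the same atom used once in the $f$-integral and once in the $g$-integral yields the diagonal measure $\E[\delta_{X_{n,i}}\otimes\delta_{X_{n,i}}]$, which survives — and averaging over $X_{n,i}$, the block produces a signed combination of tensor products of copies of $P_n$ and of such diagonal measures, each of which carries exactly one factor $a_n/n$ (since $P_n=(a_n/n)\mu_n$). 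A short count then gives: a block of size $b$ contributes at least $b$ factors $a_n/n$ if it lies wholly among the $f$-coordinates or wholly among the $g$-coordinates, and at least $b-1$ factors if it is \emph{mixed} (meets both sides). Thus the power of $a_n/n$ is at least $(k_1+k_2)-(\#\text{mixed blocks})\ge(k_1+k_2)-|\sigma|$, and after multiplying by $n^{|\sigma|}$ and the normalization $a_n^{-(k_1+k_2)/2}$, the contribution of $\sigma$ is, up to constants, $(a_n/n^{2})^{(k_1+k_2)/2-|\sigma|}$ times a further factor $(a_n/n)^{\ge \#\text{non-mixed blocks}}$. Since $a_n=o(n)$, the only $\sigma$ that do not vanish in the limit are those with $|\sigma|=(k_1+k_2)/2$ (all blocks of size $2$), all blocks mixed, and with the diagonal term selected in every block. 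But a partition of $\{1,\dots,k_1\}\sqcup\{1',\dots,k_2'\}$ into mixed pairs is precisely a bijection between the $k_1$ coordinates of $f$ and the $k_2$ coordinates of $g$, which exists iff $k_1=k_2$; this already yields the limit $0$ when $k_1\ne k_2$.

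When $k_1=k_2=k$, the surviving configurations are indexed by permutations $\sigma\in S_k$, coordinate $a$ of $f$ being paired with coordinate $\sigma(a)$ of $g$; selecting the diagonal term $\E[\delta_{X_{n,i}}\otimes\delta_{X_{n,i}}]$ in each pair identifies $u'_{\sigma(a)}$ with $u_a$ and leaves the integral $\langle f,g^{\sigma^{-1}}\rangle_{L^2(P_n^{k})}=(a_n/n)^{k}\langle f,g^{\sigma^{-1}}\rangle_{L^2(\mu_n^{k})}$, where $g^{\tau}(u_1,\dots,u_k):=g(u_{\tau(1)},\dots,u_{\tau(k)})$. Multiplying by $n(n-1)\cdots(n-k+1)/a_n^{k}\sim(n/a_n)^{k}$ and using $\mu_n^{k}\nearrow\mu^{k}$ setwise (from Assumption \ref{asu2}; legitimate for the $L^1(\mu^k)$ integrand $f\,g^{\sigma^{-1}}$), the $\sigma$-term tends to $\langle f,g^{\sigma^{-1}}\rangle_{L^2(\mu^{k})}$; summing over $\sigma$ and using $\sum_{\tau\in S_k}g^{\tau}=k!\,\wt g$ together with the self-adjointness of symmetrization gives $k!\langle\wt f,\wt g\rangle_{L^2(\mu^{k})}$, as claimed. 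All intermediate integrals are finite and bounded by $\|f\|_{L^2(\mu^{k_1})}\|g\|_{L^2(\mu^{k_2})}$ (Cauchy--Schwarz with respect to the $P_n$'s), so nothing beyond $f\in L^2(\mu^{k_1})$, $g\in L^2(\mu^{k_2})$ is needed, and the same bookkeeping also yields the uniform bound $\sup_n\E\, I_k^{(n)}(f)^2\le C_k\|f\|_{L^2(\mu^{k})}^{2}$. A less computational alternative is to verify the statement first for $f,g$ that are finite linear combinations of indicators of products of pairwise disjoint sets in $\cl E_0$ — for which $I_k^{(n)}$ becomes a polynomial in the variables $W_n(A)$, so the conclusion follows from the multivariate CLT for the triangular array $\bigl(W_n(A)\bigr)$ (its Lyapunov condition being immediate from $P_n(A)\le(a_n/n)\mu(A)$ and $a_n\to\infty$) plus uniform integrability of the relevant products — and then to extend to general $f,g\in L^2$ by density using the uniform $L^2$-bound.

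The step I expect to be the main obstacle is the diagram bookkeeping of the third paragraph: isolating, among all partitions with blocks of size $\ge 2$, exactly the perfect mixed matchings, and handling the interplay between the off-diagonal restriction and the atoms of $W_n$ with full precision. It is this interplay that produces the independently-scattered covariance $\mu(\cdot\cap\cdot)$ in the limit rather than the Brownian-bridge covariance \eqref{eq:Brown bridge cov}, and getting its sign and multiplicity right is exactly what makes the constant come out to $k!\langle\wt f,\wt g\rangle_{L^2(\mu^k)}$.
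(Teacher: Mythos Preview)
Your alternative at the end --- prove the statement first for simple $f,g$ and then extend by density using a uniform bilinear bound on $\E\bigl[I_{k_1}^{(n)}(f)I_{k_2}^{(n)}(g)\bigr]$ --- is correct and is essentially the paper's route (the paper reaches the simple-function case through Major's product and expectation formulas, Lemmas~\ref{lem6} and \ref{lem4}, rather than CLT plus uniform integrability, but the density extension via a uniform bound is the same). Your direct partition expansion is a legitimate, more self-contained substitute for Major's formulas. However, your \emph{main} argument has a real gap that the alternative is needed to fill, not merely to streamline.

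The problem is the phrase ``up to constants'' in your power count. Your tally of $(a_n/n)$-factors is right, but after pulling them out you are left with an integral of a contraction of $f\otimes g$ against a product of $\mu_n$'s, and that residual integral is \emph{not} bounded uniformly in $n$ for general $f,g\in L^2$: since $\mu(E)=\infty$, one has $L^2(\mu^k)\not\subset L^1(\mu^k)$. Concretely, for $k_1=k_2=1$ the $-P_n\otimes P_n$ piece of the unique mixed pair contributes $(n/a_n)P_n(f)P_n(g)=(a_n/n)\,\mu_n(f)\mu_n(g)$, and $\mu_n(f)$ can diverge (e.g.\ $f(x)=(1+x)^{-1}$ on $[0,\infty)$ with Lebesgue $\mu$). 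This term does tend to zero, but proving it requires precisely an approximation-by-simple-functions step; your Cauchy--Schwarz bound with respect to $P_n$ gives only $\|f\|_{L^2(P_n^{k_1})}\|g\|_{L^2(P_n^{k_2})}\le (a_n/n)^{(k_1+k_2)/2}\|f\|_{L^2(\mu^{k_1})}\|g\|_{L^2(\mu^{k_2})}$, which, combined with $n^{|\sigma|}/a_n^{(k_1+k_2)/2}$, kills all $\sigma$ with $|\sigma|<(k_1+k_2)/2$ but yields no decay for the sub-leading pieces at $|\sigma|=(k_1+k_2)/2$. The paper meets exactly the same obstacle: it reduces everything to showing that $F_l^{(n)}(f,g)=(n/a_n)^{(k_1+k_2)/2}P_n^{k_1+k_2-l}\bigl(\overline{(f\otimes g)}_{\mathcal B(l)}\bigr)\to 0$ for $l<(k_1+k_2)/2$ --- these are precisely your sub-leading terms --- and proves this by the simple $\to$ general $L^2$ approximation of Lemma~\ref{lem13}, using the uniform bound of Lemma~\ref{Lem:Fnl bounded} that your ``same bookkeeping'' does correctly deliver.
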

The proof of Proposition \ref{prop2} can be found in Section \ref{sec:pf prop} below.

The next result concerns weak convergence towards infinite-order Wiener chaos.
\begin{thm}\label{thm7}
    Suppose $h  = \{ h^{(k)}\}_{k=0}^\infty \in H$ . Then there exists a sequence $K_n \nearrow \infty$ such that as $n\rightarrow \infty$,
    \begin{equation}\label{limit thm}
        \sum_{k=0}^{K_n} I_k^{(n)}(h^{(k)}) \Rightarrow \sum_{k=0}^\infty I_k(h^{(k)}).
    \end{equation}
\end{thm}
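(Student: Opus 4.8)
The plan is to reduce the theorem, via a soft diagonal‑extraction argument, to the finite‑order weak convergence together with an $L^2$ tail estimate for the limiting Wiener chaos series. For $m\in\bb{N}_0$ set $Y_n^{(m)}:=\sum_{k=0}^{m} I_k^{(n)}(h^{(k)})$, $Y^{(m)}:=\sum_{k=0}^{m} I_k(h^{(k)})$, and $Y:=\sum_{k=0}^{\infty} I_k(h^{(k)})$; each $Y_n^{(m)}$ is almost surely well defined because $h^{(k)}\in L^2(\mu^k)$ forces $h^{(k)}\in L^1(P_n^k)$ under Assumption \ref{asu2}, and $Y$ is well defined in $L^2(\bb{P})$ because $h\in H$. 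I would use two inputs. First, the \emph{finite-order case}: for each fixed $m$, $Y_n^{(m)}\Rightarrow Y^{(m)}$ as $n\to\infty$. This is the weak convergence of multiple empirical integrals at a constant truncation level — equivalently, joint convergence of $(I_0^{(n)}(h^{(0)}),\dots,I_m^{(n)}(h^{(m)}))$ to $(I_0(h^{(0)}),\dots,I_m(h^{(m)}))$ followed by the continuous mapping theorem — and the second-moment asymptotics of Proposition \ref{prop2} are precisely the covariance matching underlying it. Second, a tail estimate on the Gaussian side: by the orthogonality relation \eqref{eq38} and $h\in H$,
\[
\E\big(Y-Y^{(m)}\big)^2=\sum_{k=m+1}^{\infty} k!\,\|\wt{h}^{(k)}\|_{L^2(\mu^k)}^2,
\]
which is the tail of the convergent series $\|h\|_H^2$ from Definition \ref{defnofH}, hence $\to 0$ as $m\to\infty$; in particular $Y^{(m)}\to Y$ in $L^2(\bb{P})$, so $Y^{(m)}\Rightarrow Y$ as $m\to\infty$.

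Now fix a metric $d$ metrizing weak convergence of $\bb{R}$-valued random variables (e.g.\ the L\'evy--Prokhorov metric). By the finite-order case, for every $m\ge 1$ there is an index $N_m$, which we may take strictly increasing in $m$, such that $d(\cl{L}(Y_n^{(m)}),\cl{L}(Y^{(m)}))\le 1/m$ for all $n\ge N_m$. Put $K_n:=\max\{m\ge 1:\ N_m\le n\}$ for $n\ge N_1$ and $K_n:=0$ otherwise; then $K_n$ is non-decreasing with $K_n\to\infty$, i.e.\ $K_n\nearrow\infty$ in the sense of Section \ref{s1}. For $n\ge N_1$ we have $n\ge N_{K_n}$ by construction, hence
\[
d\big(\cl{L}(Y_n^{(K_n)}),\cl{L}(Y)\big)\le \frac{1}{K_n}+d\big(\cl{L}(Y^{(K_n)}),\cl{L}(Y)\big)\longrightarrow 0\qquad (n\to\infty),
\]
the right-hand term vanishing because $K_n\to\infty$ and $Y^{(m)}\Rightarrow Y$ as $m\to\infty$. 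Since $Y_n^{(K_n)}=\sum_{k=0}^{K_n} I_k^{(n)}(h^{(k)})$, this is exactly \eqref{limit thm}.

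The diagonal step is routine soft analysis, so the real content — and the main obstacle — is the finite-order weak convergence $Y_n^{(m)}\Rightarrow Y^{(m)}$; this is where Proposition \ref{prop2} and, presumably, a companion argument (control of higher joint moments or cumulants, or a Lindeberg-type/martingale CLT) do the work, and it should be quoted from the paper's preceding results rather than redone here. The remaining points are pure bookkeeping: $d$ can be chosen to metrize weak convergence on $\bb{R}$; the $N_m$ can be made strictly increasing without loss; and the resulting $K_n$ satisfies the monotonicity required by $\nearrow$. It is worth noting that no uniform-in-$n$ control of the empirical tail $\sum_{k>m} I_k^{(n)}(h^{(k)})$ is needed, since the rate at which $Y_n^{(m)}\Rightarrow Y^{(m)}$ holds depends on $h$ and $m$ in an uncontrolled way; this is exactly why only the \emph{existence} of a suitable $K_n$ (rather than an explicit admissible growth rate) can be obtained this cheaply.
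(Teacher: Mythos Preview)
Your argument is correct and is genuinely simpler than the paper's. Both routes share the same two ingredients: the finite-order convergence $Y_n^{(m)}\Rightarrow Y^{(m)}$ (this is Lemma \ref{thm6} in the paper, so you are right to quote it) and the $L^2$ tail estimate $Y^{(m)}\to Y$ on the Gaussian side. The difference lies in how the growing truncation $K_n$ is produced. The paper works on the \emph{empirical} side: it defines $R_{m,n}=\|\sum_{m<k\le K_n} I_k^{(n)}(h^{(k)})\|_{L^2(\bb{P})}^2$ and shows $\lim_m\limsup_n R_{m,n}=0$ by invoking Proposition \ref{prop2} a second time, quantitatively, to force the cross-moments $\gamma_n(k_1,k_2)=\E[I_{k_1}^{(n)}(h^{(k_1)})I_{k_2}^{(n)}(h^{(k_2)})]$ uniformly small (at rates $o(K^{-2})$ off-diagonal, $o(K^{-1})$ on-diagonal) once $n\ge M(K)$, and then sets $K_n=\max\{K:\ n\ge M(K)\}$. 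You bypass this entirely by metrizing weak convergence and diagonalizing directly over $m$, so you never need to bound the empirical tail or revisit Proposition \ref{prop2}.

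What each approach buys: your soft diagonal extraction is shorter and shows that Theorem \ref{thm7} really only needs Lemma \ref{thm6} plus $h\in H$; no further second-moment analysis is required for mere existence of $K_n$. The paper's approach, by contrast, builds an explicit $L^2$ bound on the empirical remainder in terms of $K_n$ and $n$, and this is exactly the machinery reused in the proof of Proposition \ref{prop4} to obtain the explicit rate $K_n=O((\ln(n/a_n))^{1-\varepsilon})$. Your method cannot deliver such a rate, which is consistent with your own closing remark.
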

See Section \ref{sec:pf limit} below for the proof of Theorem \ref{thm7}, and see Section \ref{s0} for a discussion of its relation to existing literature.   Notice that in principle the sequence $K_n$  in Theorem \ref{thm7} could tend to infinity arbitrarily slowly. Under a specific setup, we provide an explicit rate of $N_n$. 

\begin{rem}
    As suggested by an anonymous reviewer, it is of interest to consider $\{X_{n,i}\}$ that is dependent along $i$. When the dependence is weak, we expect that the conclusions of Proposition \ref{prop2} and Theorem \ref{thm7} still hold, with a possible modification involving a multiplicative constant factor in the limits; see e.g, \cite{dehling2002empirical}. When the dependence is strong, i.e., when in the regime of \emph{long-range dependence} \cite{pipiras2017long}, a new type of limit behaviors may emerge; see, e.g.,   \cite{dehling1989empirical}. We leave the exploration of these scenarios to future works.
\end{rem}

\begin{prop} \label{prop4}
Assume the setup in Example \ref{exam1}, where $P_n(\cdot ) = \mu(\cdot \cap E_n)/\mu(E_n)$ and $a_n=\mu(E_n)$. In addition, for $h \in H$ as in Definition \ref{defnofH}, suppose that
    there exists $n_0\in \bb{N}_+$  such that  the support     $\operatorname{supp}(h^{(k)}) \subset E_{n_0}^{k}$ for all $k\in \bb{N}_+$. 
Then the conclusion of Theorem \ref{thm7} holds with     $K_n = O\left( \left[\ln \pp{n/a_n}\right]^{1-\varepsilon} \right)$ as $n\rightarrow\infty$ for any $0<\varepsilon<1$.
\end{prop}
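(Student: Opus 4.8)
The plan is to obtain an explicit, quantitative version of the truncation argument behind Theorem \ref{thm7}, using the additional compact-support hypothesis to control the second-moment error terms uniformly. Recall the general scheme: to prove $\sum_{k=0}^{K_n} I_k^{(n)}(h^{(k)}) \Rightarrow \sum_{k=0}^\infty I_k(h^{(k)})$ one splits the difference into (a) the tail $\sum_{k>K_n} I_k(h^{(k)})$ of the limit, which is $o(1)$ in $L^2(\bb P)$ since $h\in H$; (b) the fixed-order convergence $\sum_{k=0}^{K} I_k^{(n)}(h^{(k)}) \Rightarrow \sum_{k=0}^{K} I_k(h^{(k)})$ for each fixed $K$, which should already be available from the proof of Theorem \ref{thm7} (joint convergence of the vector of multiple empirical integrals to the corresponding Wiener--It\^o integrals, e.g.\ by a method of moments / CLT argument combined with Proposition \ref{prop2}); and (c) the ``mismatch'' term $\mathbb{E}\big(\sum_{k=K+1}^{K_n} I_k^{(n)}(h^{(k)})\big)^2$, which must be shown to be small. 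The whole point of Proposition \ref{prop4} is that under the support condition one can bound (c) explicitly and thereby read off an admissible growth rate for $K_n$.

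First I would establish a quantitative bound on $\mathbb{E}\, I_k^{(n)}(h^{(k)})^2$ and, more generally, on $\mathbb{E}\big(I_{k}^{(n)}(h^{(k)}) I_{k'}^{(n)}(h^{(k')})\big)$, making the dependence on $k$ and on $n/a_n$ explicit. Revisiting the computation behind Proposition \ref{prop2} (Section \ref{sec:pf prop}), the second moment of $I_k^{(n)}(f)$ expands, via the combinatorics of the empirical measure $W_n$, into the ``main'' term $k!\,\|\widetilde f\|^2_{L^2(\mu_n^k)}$ plus lower-order ``diagonal'' or ``partial-pairing'' terms, each of which carries a factor of the form $(\text{number of pairings})\times (a_n/n)^{\#\{\text{collapsed coordinates}\}}\times \mu_n(\cdot)$-integrals. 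Under the hypothesis $\operatorname{supp}(h^{(k)})\subset E_{n_0}^k$ and $P_n(\cdot)=\mu(\cdot\cap E_n)/\mu(E_n)$, all these integrals are dominated by $\|h^{(k)}\|_{L^\infty}$ or $\|h^{(k)}\|_{L^2(\mu^k)}$ against $\mu(E_{n_0})$ raised to a fixed power, so that the error relative to the main term is of order $C^k k!\,/(n/a_n)$ — i.e.\ each fixed chaos is well approximated with relative error $O((n/a_n)^{-1})$ but the constant grows combinatorially in $k$. Collecting this over $K< k\le K_n$ yields a bound of the shape $\mathbb{E}\big(\sum_{k=K+1}^{K_n} I_k^{(n)}(h^{(k)})\big)^2 \le \sum_{k=K+1}^{K_n} k!\,\|\widetilde h^{(k)}\|^2_{L^2(\mu^k)} + \frac{C^{K_n}(K_n!)}{n/a_n}\sum_{k} \|h^{(k)}\|^2$, where the first sum is the $H$-tail (small once $K$ is large) and the second is the genuinely new term.

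Next, I would choose $K_n$ so that the new term vanishes: requiring $\dfrac{C^{K_n}\,K_n!}{n/a_n}\to 0$, and using $\ln(C^{K_n}K_n!) = K_n\ln K_n (1+o(1))$, it suffices that $K_n \ln K_n \le (1-\varepsilon')\ln(n/a_n)$ for some $\varepsilon'>0$, which is implied by $K_n = O\big([\ln(n/a_n)]^{1-\varepsilon}\big)$ for any $0<\varepsilon<1$ (since then $K_n\ln K_n \asymp [\ln(n/a_n)]^{1-\varepsilon}\ln\ln(n/a_n) = o(\ln(n/a_n))$). Note $n/a_n = n/\mu(E_n)\to\infty$ by Assumption \ref{asu2}, so this is a legitimate growing sequence. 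With $K_n$ so chosen, I finish by a standard $3\varepsilon$-argument: given a bounded Lipschitz test function $\phi$, write $\mathbb{E}\phi(S_n^{K_n}) - \mathbb{E}\phi(S_\infty)$, insert $S_n^{K}$ and $S^{K}$ for fixed large $K$, bound the two ``tail'' differences by (Cauchy--Schwarz and) the $L^2$ estimates above — term (a) and term (c) — uniformly in $n$, and send $n\to\infty$ with $K$ fixed to kill the middle term by part (b); then let $K\to\infty$.

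The main obstacle I anticipate is \emph{bookkeeping the combinatorial constant} in the second-moment expansion: one must verify that the error terms in $\mathbb{E}I_k^{(n)}(f)I_{k'}^{(n)}(g)$ — arising from collapsed diagonals and from the $-P_n(B)$ centering in \eqref{eq3} — are each bounded by (number of set-partition-type pairings, which is at most $(k+k')!$ or $C^{k}k!$) times a negative power of $n/a_n$ times a fixed power of $\mu(E_{n_0})$ times $\|h^{(k)}\|$-type norms, with no hidden growth in the measure of the ambient space. This is exactly where the support assumption $\operatorname{supp}(h^{(k)})\subset E_{n_0}^k$ is essential: without it, integrals like $\mu_n(E)=n/a_n\to\infty$ would appear and destroy the bound. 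A secondary, more routine point is to make sure the fixed-$K$ joint convergence in part (b) is genuinely available as stated in (or immediately from) the proof of Theorem \ref{thm7}, so that it can be invoked as a black box here; if the proof of Theorem \ref{thm7} only gives convergence of $\sum_{k\le K_n}$ for the particular slow $K_n$ it constructs, one should instead extract the fixed-$K$ statement directly, which follows from Proposition \ref{prop2} plus a multivariate CLT / method-of-moments argument for $(I_0^{(n)},\dots,I_K^{(n)})$.
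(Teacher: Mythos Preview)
Your proposal is correct and follows essentially the same route as the paper: both revisit the diagram-formula expansion behind Proposition~\ref{prop2} to obtain explicit bounds of the form $\sup_{k_1,k_2\le K}|\gamma_n(k_1,k_2)-\text{(limit)}|\le e^{O(K\ln K)}(a_n/n)^{c}$, using the support hypothesis to replace $\mu_n$-integrals by $\mu$-integrals over $E_{n_0}$ (in particular making the main term exact for $n\ge n_0$), and then solve $K_n\ln K_n=o(\ln(n/a_n))$ to get $K_n=O\big([\ln(n/a_n)]^{1-\varepsilon}\big)$. The only caveat is that the combinatorial constant is closer to $C^K K^{2K}$ than to $C^K K!$ (two factorial-type factors arise, one from $|\mathcal{B}(l,p)|$ and one from the $(k_1+k_2-l-p)!\,B_{n,k_1+k_2-l-p}$ coefficient), and the power of $a_n/n$ is $1/2$ rather than $1$; neither discrepancy changes the admissible rate for $K_n$.
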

The proof of Proposition \ref{prop4} is included in  Section \ref{sec:pf limit} below.

\begin{exam}
 To construct a concrete example to illustrate Theorem \ref{thm7} and Proposition \ref{prop4},  take $(E,\cl{E},\mu)=([0,\infty),\mathfrak{B}([0,\infty)),\lambda)$, where $\lambda$ denotes the Lebesgue measure. Set $E_n=[0,n^{1/2}]$, $a_n=n^{1/2}$, and define $P_n(\cdot )= n^{-1/2} \lambda(\cdot \cap E_n) $.  Then $\mu_n(\cdot)= \lambda(\cdot \cap E_n)\nearrow \lambda(\cdot)$  as $n\rightarrow\infty$. Further, let $h^{(k)} = k^{-1}\mathbb{I}_{[0,1]\times[0,1/2]\times \cdots \times [0,1/k]}$, and relation (\ref{eq:H norm}) can be readily verified. Let $n_0 = 1$, $\varepsilon = 1/2$, and with $K_n = O\left( \left[\ln  n \right]^{1/2} \right)$, the conclusion (\ref{limit thm}) follows. 
\end{exam}

\section{Proofs}\label{s4}

\subsection{Asymptotic moment properties of multiple empirical integrals} \label{sec:pf prop}

We first prepare a lemma regarding a sequence of non-decreasing measures.
\begin{lem}\label{lem2}
Suppose $(U,\mathcal{G},m)$ is a measure space. Assume  a sequence of measures $m_n$ on $(U,\cl{G})$ satisfies $m_n\nearrow m$ setwise on $\mathcal{G}$ as $n\rightarrow\infty$. Then  for any non-negative measurable function $f$ on $U$, we have  $m_n(f) \nearrow m (f)$ as $n\rightarrow\infty$.
   For any $m$-integrable  function $f$ on $U$, we have 
$m_n(f) \rightarrow m(f)$  as $n\rightarrow\infty$. 
\end{lem}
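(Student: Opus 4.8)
The plan is to prove Lemma~\ref{lem2} by first handling non-negative functions via the standard measure-theoretic ``machine'' (simple functions, then monotone convergence), and then deducing the integrable case by splitting into positive and negative parts.

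\textbf{Step 1: Indicators and simple functions.} For $f = \ind_A$ with $A \in \mathcal{G}$, the claim $m_n(f) = m_n(A) \nearrow m(A) = m(f)$ is exactly the hypothesis $m_n \nearrow m$ setwise. By linearity of the integral, the same holds for any non-negative simple function $f = \sum_{i=1}^\ell c_i \ind_{A_i}$ with $c_i \ge 0$: each term $c_i m_n(A_i)$ is non-decreasing in $n$ and converges to $c_i m(A_i)$, and a finite sum of such terms inherits both properties.

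\textbf{Step 2: General non-negative $f$ via monotone convergence.} Pick a standard sequence of non-negative simple functions $s_j \nearrow f$ pointwise (e.g. the usual dyadic truncation $s_j = \sum_{i=0}^{j2^j - 1} i2^{-j}\ind_{\{i2^{-j} \le f < (i+1)2^{-j}\}} + j\ind_{\{f \ge j\}}$). For each fixed $n$, the monotone convergence theorem applied to the measure $m_n$ gives $m_n(s_j) \nearrow m_n(f)$ as $j \to \infty$; likewise $m(s_j) \nearrow m(f)$. Monotonicity in $n$ is immediate: if $n \le n'$ then $m_n(f) = \sup_j m_n(s_j) \le \sup_j m_{n'}(s_j) = m_{n'}(f)$ since $m_n(s_j) \le m_{n'}(s_j)$ by Step~1. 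For the limit, we have a double array $a_{n,j} := m_n(s_j)$ that is non-decreasing in each index separately, with $\lim_j a_{n,j} = m_n(f)$ and, by Step~1, $\lim_n a_{n,j} = m(s_j)$. For such a monotone double array the iterated suprema coincide with the joint supremum, so
\[
\lim_{n\to\infty} m_n(f) = \sup_n \sup_j a_{n,j} = \sup_j \sup_n a_{n,j} = \sup_j m(s_j) = m(f),
\]
which establishes $m_n(f) \nearrow m(f)$.

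\textbf{Step 3: Integrable $f$.} Write $f = f^+ - f^-$ with $f^\pm \ge 0$. Since $f$ is $m$-integrable, $m(f^+), m(f^-) < \infty$, and by Step~2 the relation $m_n \nearrow m$ setwise forces $m_n \le m$ setwise (take limits in $m_n \le m_{n'}$, or note $m_n(A) = \sup_{n'} m_{n'}(A)$ is not quite that — better: $m_n(A) \le \lim_{n'} m_{n'}(A) = m(A)$), hence $m_n(f^\pm) \le m(f^\pm) < \infty$ and in particular each $f^\pm$ is $m_n$-integrable, so $m_n(f) = m_n(f^+) - m_n(f^-)$ is well-defined. Applying Step~2 to $f^+$ and to $f^-$ separately gives $m_n(f^+) \to m(f^+)$ and $m_n(f^-) \to m(f^-)$, and subtracting the two finite limits yields $m_n(f) \to m(f)$.

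The only mildly delicate point — and the one I would be most careful about — is the interchange of the two limits (in $n$ and in the approximating index $j$) in Step~2; this is why it is essential to arrange the approximating sequence so that the double array $a_{n,j} = m_n(s_j)$ is monotone in \emph{both} arguments, after which the interchange is just the elementary fact that $\sup_n \sup_j = \sup_j \sup_n$ for a non-negative array. Everything else is routine. No additional hypotheses (such as $\sigma$-finiteness of $m$) are needed, since the argument never requires a dominating integrable function — monotonicity does all the work.
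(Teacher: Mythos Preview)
Your proof is correct and follows essentially the same route as the paper's: approximate a non-negative $f$ by simple functions $s_j\nearrow f$, use the setwise hypothesis plus linearity to get $m_n(s_j)\nearrow m(s_j)$, interchange the two monotone limits via $\sup_n\sup_j=\sup_j\sup_n$, and then handle $f\in L^1(m)$ by splitting $f=f^+-f^-$. Your version is in fact slightly more explicit than the paper's (you justify well-definedness of $m_n(f)$ in Step~3 via $m_n\le m$), but the structure is the same.
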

\begin{proof}
For a non-negative measurable $f$, there exists a sequence of non-negative simple functions $ g_\ell \nearrow f$ pointwise as $\ell\rightarrow\infty$. It follows from the assumption and linearity that $m_n(g_\ell) \nearrow m(g_\ell)$ as $n \rightarrow \infty$. Furthermore, $m(g_{\ell}) \nearrow m(f)$ as $\ell \rightarrow \infty$.   The claimed monotonicity follows from letting $\ell\rightarrow\infty$ in $m_n(g_\ell)\le m_{n+1}(g_\ell)$.   Since $m_n(g_\ell)$ is non-decreasing with respect to  either of the indices $n$ and $\ell$, we can interchange the limit as $\lim_{\ell }\lim_{n} m_n(g_\ell) = \lim_{n} \lim_{\ell}m_n(g_\ell)$. This implies the first convergence.
For $f \in L^1(m)$, write $f=f^+-f^-$ where $f^{+}=\max \{0, f\}, f^{-}=\max \{0,-f\}$. Then applying  the first claim to both $f^+$ and $f^-$ yields  the second claim.
\end{proof}

{Below following \cite{major2005estimate}, we introduce certain combinatorial notions useful for analysis of moments of multiple empirical integrals.

 We define the \emph{diagram} $B(\mathcal{N})=B\left(\mathcal{N}, k_1, k_2\right)$ as a diagram with $k_1$ vertices in the first row and $k_2$ vertices in the second row with edge set
 \begin{equation}\label{eq:edge set}
 \mathcal{N}=\pc{\left(j_1, j_1^{\prime}\right), \ldots, \left(j_l, j_l^{\prime}\right)},
 \end{equation} 
 where $1 \leq j_s \leq k_1$, $k_1+1 \leq j_s^{\prime} \leq k_1+k_2$, and $j_s \neq j_{s^{\prime}}, j_s^{\prime} \neq j_{s^{\prime}}^{\prime}$ if $s \neq s^{\prime}$, $1\le s,s'\le \ell$ and $\ell \leq \min\{ k_1, k_2\}$. Then given a diagram $B(\mathcal{N})$ and a subset $\mathcal{N}_1$ of $\mathcal{N}$, we define the \emph{colored diagram} $B\left(\mathcal{N}, \mathcal{N}_1\right)=B\left(\mathcal{N}, \mathcal{N}_1, k_1, k_2\right)$ by coloring the edge set $\cl{N}$ of a diagram $B\left(\mathcal{N}\right)$  by two different colors depending on whether or not an edge belongs to $\mathcal{N}_1$.

Given two real-valued measurable functions $f\left(x_1, \ldots, x_{k_1}\right)$ and $g\left(y_1, \ldots, y_{k_2}\right)$ on the spaces $\left(E^{k_1}, \mathcal{E}^{k_1}\right)$ and $\left(E^{k_2}, \mathcal{E}^{k_2}\right)$ respectively, we   define the tensor product function as
\begin{equation}\label{eq:f g tensor}
f \otimes g\left(x_1, \ldots, x_{k_1+k_2}\right)=f\left(x_1, \ldots, x_{k_1}\right) g\left(x_{k_1+1}, \ldots, x_{k_1+k_2}\right).
\end{equation}
Besides, denote $\left(f \otimes g\right)_{B(\mathcal{N})}$ the function of $k_1+k_2-l$ variables obtained by identifying the   pairs of arguments of $f \otimes g$ according to $\cl{N}$, i.e., setting    $x_{j_s} =x_{j_s^\prime}$  in the function $f \otimes g\left(x_1, \ldots, x_{k_1+k_2}\right)$, $s = 1,2, \ldots, l$. For example, if $k_1=3$, $k_2=2$, $\cl{N}=\{(1,4),(3,5)\}$, then $\left(f \otimes g\right)_{B(\mathcal{N})}(y_1,y_2,y_3)= f(y_1,y_2,y_3)g(y_1,y_3)$.  When $\cl{N}=\emptyset$ (or equivalently $l=0$), we understand $\left(f \otimes g\right)_{B(\mathcal{N})} $ as $ f \otimes g $.  

Next denote $\mathcal{B}(l) = \mathcal{B}(l,k_1,k_2)$ the collection of  all the diagrams $B(\mathcal{N})$ with $|\mathcal{N}|=l$ edges. We define     
\begin{equation}\label{abn}
    \overline{(f \otimes g)}_{\mathcal{B}(l)} = \frac{1}{|\mathcal{B}(l,k_1,k_2)|}\sum_{B(\mathcal{N})\in \mathcal{B}(l,k_1,k_2) } (f \otimes g)_{B(\mathcal{N})},
\end{equation}
where $|\mathcal{B}(l,k_1,k_2)|=\frac{k_{1} ! k_{2} !}{\left(k_1-l\right) !\left(k_2-l\right) ! l !}$. Note that when $l = 0$, we have $\overline{(f \otimes g)}_{\mathcal{B}(l)} = f\otimes g$.

Now suppose $B\left(\mathcal{N}, \mathcal{N}_1\right)=B\left(\mathcal{N}, \mathcal{N}_1, k_1, k_2\right)$ is   a colored diagram with $|\cl{N}|=l$ and $|\cl{N}_1|=p$.
We use
$
\left(f \otimes g\right)_{B\left(\mathcal{N}, \mathcal{N}_1\right)} 
$
to denote the function of $k_1+k_2-l-p$ variables by integrating $(f \otimes g)_{B(\mathcal{N})}$ out the subset of arguments corresponding to the edges in $\cl{N}_1$  with respect to $P_n$,  given that integrability holds.  For example, if $k_1=3$, $k_2=2$, $\cl{N}=\{(1,4),(3,5)\}$, $\cl{N}_1=\{(1,4)\}$, then $\left(f \otimes g\right)_{B(\mathcal{N},\mathcal{N}_1)}(y_1,y_2)= \int_E f(x,y_1,y_2)g(x,y_2) P_n(dx)$. When $\cl{N}=\cl{N}_1=\emptyset$ (or equivalently $l=p=0$), we understand    
$\left(f \otimes g\right)_{B(\mathcal{N},\cl{N}_1)}$  as $f \otimes g$.

We  use  $\mathcal{B}(l, p)=\mathcal{B}\left(l, p, k_1, k_2\right)$ to denote the collection of colored diagrams $B\left(\mathcal{N}, \mathcal{N}_1\right)=B\left(\mathcal{N}, \mathcal{N}_1, k_1, k_2\right)$ with $k_1$ vertices in the first row and $k_2$ vertices in the second row,   $|\mathcal{N}|=l$, and $\left|\mathcal{N}_1\right|=p$. We then define
\begin{align}\label{defBlp}
    \overline{(f \otimes g)}_{\mathcal{B}(l,p)} &=  \overline{(f \otimes g)}_{\mathcal{B}(l,p,k_1,k_2)} = \frac{1}{|\mathcal{B}(l,p,k_1,k_2)|} \sum_{B(\mathcal{N},\mathcal{N}_1) \in \mathcal{B}(l, p,k_1,k_2)} (f \otimes g)_{B\left(\mathcal{N}, \mathcal{N}_1\right)},
\end{align}
where $|\mathcal{B}(l,p,k_1,k_2)| = \frac{k_{1} ! k_{2} !}{\left(k_1-l\right) !\left(k_2-l\right) !(l-p) ! p !}$. Note that when $l = 0$, we have $\overline{(f \otimes g)}_{\mathcal{B}\left(l, p, k_1, k_2\right)} = f\otimes g$.  
}

We shall also take advantage of some results from \cite{major2005estimate}.
The following   inequality, which essentially follows from a Cauchy-Schwartz inequality, will be useful:  for $f\otimes g$ as in \eqref{eq:f g tensor}  and any edge set $\cl{N}$ of the form \eqref{eq:edge set}, we have
\begin{equation}\label{contraction2}
\left\|\left(f \otimes g\right)_{B(\mathcal{N})}\right\|_{L^2 \left(P_n^{k_1+k_2-l}\right)}=\left\|\left(f \otimes g\right)_{B(\mathcal{N},\mathcal{N})}\right\|_{L^2 \left(P_n^{k_1+k_2-2l}\right)} \leqslant \|f\|_{L^2 (P_n^{k_1})} \cdot\|g\|_{L^2(P_n^{k_2})}\text{.}
\end{equation}
See \cite[Equation (3.15)]{major2005estimate}.  We note that \eqref{contraction2} continues to hold if $P_n$ is replaced by a $\sigma$-finite measure.

The following lemma is \cite[Lemma 1]{major2005estimate}  adapted to our notation. In particular, the notation  $J_{n, k}(f)$ in \cite{major2005estimate} follows the relation $J_{n, k}(f) = \frac{1}{k!} (\frac{a_n}{n})^{k/2} I^{(n)}_k(f)$, where  $I^{(n)}_k(f)$ is as defined in (\ref{eq I_n}).

\begin{lem}\label{lem4}
 Suppose $f\in L^1(\mu^k)$, $k\in \bb{N}_0$.   Then for $n\in \bb{N}_+$,
$$
E I^{(n)}_k(f)=k!\, B_{n, k}\,\left(\frac{n}{a_n}\right)^{k/2}  P_n^{ k}(f),
$$
where for $k\ge 1$,
$$
B_{n, k}=\frac{1}{k ! n^{k / 2}} \sum_{s=1}^k(-1)^{k-s}\left(\begin{array}{c}
n \\
s
\end{array}\right) s ! \times\left(\sum_{\substack{\left(r_1, \ldots, r_s\right) \\
r_j \in \mathbb{N}_{+}, 1 \leq j \leq s, r_1+\cdots+r_s=k}}\left(r_1-1\right) \cdots\left(r_s-1\right) B\left(r_1, \ldots, r_s\right)\right),
$$
and $B\left(r_1, \ldots, r_s\right)$ equals the number of partitions of the set $\{1, \ldots, k\}$ into disjoint sets with cardinalities $r_1, \ldots, r_s\ge 1$, and we set $B_{n,0}=1$.
 The coefficient $B_{n, k}$ satisfies the estimate (cf.\ \cite[Equation (3.10)]{major2005estimate})
\begin{equation}\label{eq:B_n,k bound}
\left|B_{n, k}\right| \leq \frac{C^k}{k^{k / 2}}
\end{equation}
with some constant $C>0$ which does not depend on $k$ and $n$, for all $n \geq \frac{k}{2}$.
\end{lem}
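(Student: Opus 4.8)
The plan is to obtain the formula for $EI_k^{(n)}(f)$ by a direct moment expansion of the multiple empirical integral, and then to read off \eqref{eq:B_n,k bound} as a purely combinatorial estimate that is uniform in $n$. Write $W_n=a_n^{-1/2}\sum_{i=1}^n\eta_{n,i}$ with $\eta_{n,i}=\delta_{X_{n,i}}-P_n$, a signed measure with $E\eta_{n,i}=0$. Since $f\in L^1(\mu^k)$ and $(n/a_n)P_n\le\mu$ by Assumption \ref{asu2}, we have $f\in L^1(P_n^k)$, so every integral below is absolutely convergent and Fubini applies. Expanding,
\[
I_k^{(n)}(f)=a_n^{-k/2}\sum_{i_1,\dots,i_k=1}^n\int_{E^k}^{\prime}f(u_1,\dots,u_k)\,\eta_{n,i_1}(du_1)\cdots\eta_{n,i_k}(du_k),
\]
and for each fixed tuple $(i_1,\dots,i_k)$ we group the coordinates according to which ones carry a common index value, obtaining a set partition of $\{1,\dots,k\}$ into $s$ blocks of sizes $r_1,\dots,r_s$, $s$ being the number of distinct indices used.

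The structural point, which uses that $\mu$ and hence $P_n$ is non-atomic (immediate from $\mu_n\le\mu$), is that the diagonal exclusion only bites within a block: among the $r$ coordinates sharing an index $j$, at most one may coincide with the atom at $X_{n,j}$, since two would sit on a diagonal $u_a=u_b$; coincidences across blocks, or between an atom coordinate and a $P_n$-coordinate, are $\mu$-null and drop out. Expanding $\eta_{n,j}^{\otimes r}$ accordingly, a block of size $r$ yields one ``all $P_n$'' term of sign $(-1)^r$ together with $r$ ``single atom'' terms each of sign $(-1)^{r-1}$; taking expectation over $X_{n,j}\sim P_n$ turns every single-atom term into the same $P_n^r$-integral, so the block's net scalar contribution is $(-1)^r+r(-1)^{r-1}=(-1)^{r-1}(r-1)$ --- in particular $0$ when $r=1$, reflecting $E\eta_{n,j}=0$. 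Because distinct index values correspond to independent $X_{n,j}$'s, Fubini decouples the blocks, and the expected contribution of a tuple with block sizes $r_1,\dots,r_s$ equals $(-1)^{k-s}\bigl(\prod_{i=1}^s(r_i-1)\bigr)P_n^k(f)$.

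It then remains to count index tuples by coincidence pattern: the number of tuples realizing a prescribed block-size profile is the relevant set-partition count --- this is where $B(r_1,\dots,r_s)$ enters --- times the falling factorial $(n)_s=\binom{n}{s}s!$ for placing distinct values of $\{1,\dots,n\}$ on the blocks. Summing over $s$ and over the compositions $(r_1,\dots,r_s)$ with $\sum_i r_i=k$, and splitting $a_n^{-k/2}=(n/a_n)^{k/2}n^{-k/2}$, one absorbs the $n^{-k/2}$ and the combinatorial factors into $B_{n,k}$ and arrives at $EI_k^{(n)}(f)=k!\,B_{n,k}(n/a_n)^{k/2}P_n^k(f)$ with $B_{n,k}$ as stated; the cases $k=0,1$ (giving $c$ and $0$, with $B_{n,1}=0$) provide a quick consistency check.

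For \eqref{eq:B_n,k bound}, note that $\prod_i(r_i-1)$ vanishes unless every $r_i\ge 2$, so only $1\le s\le\lfloor k/2\rfloor$ contributes; then $(n)_s\le n^s\le n^{k/2}$ whenever $n\ge k/2$, which cancels the $n^{-k/2}$ in $B_{n,k}$ uniformly in $n$. What remains is to bound $\tfrac{1}{k!}\sum_s\sum_{(r_1,\dots,r_s)}(r_1-1)\cdots(r_s-1)\,B(r_1,\dots,r_s)$ by $C^kk^{-k/2}$; a generating-function estimate shows the dominant term is the perfect-matching one ($s=k/2$, all $r_i=2$), of size $\frac{(k-1)!!}{k!}$ up to a bounded factor, which is of order $(\sqrt{e})^k k^{-k/2}$ by Stirling, the remaining terms being smaller. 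I expect the main obstacle to be precisely this twofold bookkeeping: tracking the diagonal exclusion for the signed measure $W_n$ (within- versus across-block, atomic versus continuous parts) in the expansion, and then making the sharp $k^{-k/2}$ combinatorial estimate. Since the statement is \cite[Lemma 1]{major2005estimate} with the fixed distribution there replaced by the (fixed-$n$) distribution $P_n$ and with $\mu$ only required to be $\sigma$-finite and non-atomic, the genuinely new points in the adaptation are merely the non-atomicity of $P_n$ under Assumption \ref{asu2} and the fact that the constant $C$ in \eqref{eq:B_n,k bound} is purely combinatorial, hence independent of $n$; the estimate itself may then be quoted from \cite[Eq.~(3.10)]{major2005estimate}.
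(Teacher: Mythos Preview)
Your proposal is correct, and in fact goes well beyond what the paper does: the paper does not prove this lemma at all but simply states it as an adaptation of \cite[Lemma~1]{major2005estimate} to the present notation, with the bound \eqref{eq:B_n,k bound} quoted from \cite[Eq.~(3.10)]{major2005estimate}. Your moment expansion via block partitions of the index tuple, the within-block computation yielding the factor $(-1)^{r-1}(r-1)$, and the observation that the constant in \eqref{eq:B_n,k bound} is purely combinatorial (hence uniform in $n$) are exactly the ingredients of Major's original argument; the only points genuinely specific to the present setting---non-atomicity of $P_n$ inherited from $\mu_n\le\mu$, and $f\in L^1(P_n^k)$ following from $f\in L^1(\mu^k)$---you have identified correctly.
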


\begin{lem}\label{lem5}
   For any $f \in L^2( \mu^k)$, and fixed $k\in \bb{N}_+$, as $n \rightarrow \infty$,
    $
  \left(n/a_n\right)^{k/2} P_n^{ k}(f) 
  \rightarrow 0.
  $
\end{lem}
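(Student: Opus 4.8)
The plan is to pass to the measures $\mu_n = (n/a_n)P_n$ from Assumption \ref{asu2}. Since $P_n^k = (a_n/n)^k\mu_n^k$, we have $(n/a_n)^{k/2}P_n^k(f) = (a_n/n)^{k/2}\mu_n^k(f)$, where $\mu_n^k$ is a finite measure of total mass $(n/a_n)^k$ (because $P_n(E)=1$) and $(a_n/n)^{k/2}\to 0$ since $a_n=o(n)$. So it suffices to prove $(a_n/n)^{k/2}|\mu_n^k(f)|\to 0$. The difficulty is that $f\in L^2(\mu^k)$ need not lie in $L^1(\mu^k)$ (recall $\mu(E)=\infty$), so $\mu_n^k(f)$ need not stay bounded a priori; I will control it by a truncation together with Cauchy--Schwarz.

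The preliminary step is to verify that $\mu_n^k\nearrow\mu^k$ setwise on $\mathcal{E}^k$. This follows by induction on $k$ from Assumption \ref{asu2}: using the iterated-integral representation $\mu_n^{k}(A)=\int_E \mu_n^{k-1}(A_x)\,\mu_n(dx)$ for sections $A_x$, monotonicity in $n$ is immediate, while the limit is identified by first letting $n\to\infty$ in $\int_E \mu_m^{k-1}(A_x)\,\mu_n(dx)$ for fixed $m$ via Lemma \ref{lem2}, and then letting $m\to\infty$ by monotone convergence. In particular $\mu_n^k\le\mu^k$ setwise, so $\|g\|_{L^2(\mu_n^k)}\le\|g\|_{L^2(\mu^k)}$ for every $g$, and Lemma \ref{lem2} is applicable on $(E^k,\mathcal{E}^k,\mu^k)$ with the measures $\mu_n^k$.

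Now fix an exhausting sequence $F_R\uparrow E$ with $\mu(F_R)<\infty$ (possible since $\mu$ is $\sigma$-finite); then $F_R^k\uparrow E^k$ and $\mu^k(F_R^k)<\infty$. Split $f=f\mathbf{1}_{F_R^k}+f\mathbf{1}_{E^k\setminus F_R^k}$. The first piece lies in $L^1(\mu^k)$ by Cauchy--Schwarz, so $\mu_n^k(f\mathbf{1}_{F_R^k})\to\mu^k(f\mathbf{1}_{F_R^k})$, a finite constant, by Lemma \ref{lem2}; multiplying by $(a_n/n)^{k/2}\to 0$ kills this term. For the second piece, Cauchy--Schwarz with respect to $\mu_n^k$ gives $|\mu_n^k(f\mathbf{1}_{E^k\setminus F_R^k})|\le\|f\mathbf{1}_{E^k\setminus F_R^k}\|_{L^2(\mu_n^k)}\,\mu_n^k(E^k)^{1/2}\le\|f\mathbf{1}_{E^k\setminus F_R^k}\|_{L^2(\mu^k)}(n/a_n)^{k/2}$, hence $(a_n/n)^{k/2}|\mu_n^k(f\mathbf{1}_{E^k\setminus F_R^k})|\le\|f\mathbf{1}_{E^k\setminus F_R^k}\|_{L^2(\mu^k)}=:\varepsilon_R$. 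Combining, $\limsup_n(a_n/n)^{k/2}|\mu_n^k(f)|\le\varepsilon_R$ for every $R$, and $\varepsilon_R\to 0$ as $R\to\infty$ by dominated convergence (the integrand is dominated by $f^2\in L^1(\mu^k)$), which gives the claim.

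I expect the main point requiring care to be the bookkeeping in the second paragraph—deriving $\mu_n^k\nearrow\mu^k$ setwise from its one-dimensional counterpart—since a direct $\pi$--$\lambda$ argument does not yield monotonicity of product measures and one has to route through the iterated integral and Lemma \ref{lem2}. The rest is a routine $L^2$-truncation; an equivalent alternative is to approximate $f$ in $L^2(\mu^k)$ by a simple function supported on a set of finite $\mu^k$-measure and argue in the same way.
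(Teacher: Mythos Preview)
Your proof is correct. Both your argument and the paper's rest on the same core mechanism: control the ``tail'' part of $f$ via Cauchy--Schwarz and the domination $(n/a_n)^kP_n^k\le\mu^k$, then handle the ``nice'' part separately. The difference lies in the decomposition. The paper approximates $f$ by simple functions $g_m\in\mathcal{S}^k$; for simple $g$ one computes $(n/a_n)^{k/2}P_n^k(g)$ directly as a finite sum of products of $(n/a_n)^{1/2}P_n(B_i)\to 0$, so no appeal to Lemma~\ref{lem2} is needed, and the error $(n/a_n)^{k/2}P_n^k(f-g_m)$ is bounded by $\|f-g_m\|_{L^2(\mu^k)}$ exactly as in your second-piece estimate. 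You instead truncate $f$ to a set $F_R^k$ of finite $\mu^k$-measure and invoke Lemma~\ref{lem2} for the truncated piece, which forces you to first establish $\mu_n^k\nearrow\mu^k$ setwise. That inductive step is correct but is extra work the paper avoids; your own closing remark that a simple-function approximation would be an ``equivalent alternative'' is in fact precisely what the paper does, and is the shorter route here.
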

\begin{proof}
By Assumption \ref{asu2}, for any $B\in \cl{E}_0$, we have    $(n/a_n) P_n(B) \rightarrow \mu(B)$ as $n\rightarrow\infty$. Since $a_n=o(n)$, we deduce $ (n/a_n)^{1/2} P_n(B)\rightarrow 0$ as $n\rightarrow\infty$. 
   Using this fact and the linearity of the integral $P_n^k(\cdot)$, it can be readily verified that the conclusion of the lemma holds true for any simple function $g \in \mathcal{S}^k$.
   
    Now suppose $f\in L^2\pp{\mu^k}$. Take simple functions $ g_{m} \in \mathcal{S}^k$ such that $\| g_m - f\|_{L^2(\mu^k)} \rightarrow 0$ as $m \rightarrow \infty$.  Then
\begin{align*}
\left|\left(\frac{n}{a_n}\right)^{k / 2} P_n^{ k}(f) \,\right| & \leq \left| \left(\frac{n}{a_n}\right)^{k / 2} P_n^{ k}(f-g_m) \,\right| + \left|\left(\frac{n}{a_n}\right)^{k / 2} P_n^{ k}(g_m) \,\right|\\
& \leq  \left(\frac{n}{a_n}\right)^{k / 2} P_n^{ k}(\left|f-g_m\right|) + o_n(1) 
  \leq  \sqrt{\left(\frac{n}{a_n}\right)^k P_n^{ k}(\left|f-g_m\right|^2)} +  o_n(1)\\
& \leq \|f-g_m \|_{L^2(\mu^k)}+  o_n(1).
\end{align*}
For the last inequality above, recall $(n/a_n)P_n\le \mu$ from Assumption \ref{asu2}, which implies $(n/a_n)^k P_n^k\le \mu^k$.  
The conclusion follows then by letting $n \rightarrow \infty$  and then $m \rightarrow \infty$ at both ends of the inequality displayed above.
\end{proof}

The following lemma  is essentially \cite[Lemma 2]{major2005estimate} with an adaptation of the notation.
\begin{lem}\label{lem6}
(Diagram formula). Suppose $f \in L^2( \mu^{k_1})$, $g \in L^2( \mu^{k_2})$, $k_1,k_2\in \bb{N}_+$.  Then we have the following identity.
\begin{equation}\label{eq:prod decomp}
    I_{k_1}^{(n)}(f) I_{k_2}^{(n)}(g)=\sum_{l=0}^{\min \left(k_1, k_2\right)}  \sum_{p=0}^l \left(\frac{n}{a_n}\right)^{\frac{l+p}{2}} \left|\mathcal{B}(l, p)\right| n^{-\frac{l-p}{2}} I_{k_1+k_2-l-p}^{(n)}\left( \overline{(f \otimes g)}_{\mathcal{B}\left(l, p\right)} \right)
\end{equation}
with $\overline{(f \otimes g)}_{\mathcal{B}\left(l, p\right)}$  as in (\ref{defBlp}), $0 \leq p \leq l \leq$ $\min \left(k_1, k_2\right)$.
\end{lem}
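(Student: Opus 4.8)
The plan is to treat the asserted identity, for each fixed $n$, as a purely algebraic statement about off-diagonal multiple integrals with respect to the signed measure $W_n$ of \eqref{eq3}. For fixed $n$, $P_n$ is an ordinary probability measure, and it is non-atomic since it is setwise dominated by $\mu$ (by Assumption \ref{asu2}); hence the claim is exactly \cite[Lemma 2]{major2005estimate} with sample size $n$ and underlying law $P=P_n$. Nothing in that statement is asymptotic, so the variation of $P_n$ with $n$ plays no role here, and the shortest route is to quote that result after checking the notational dictionary: Major's normalized statistic corresponds to ours via $J_{n,k}(f)=\frac1{k!}(a_n/n)^{k/2}I_k^{(n)}(f)$ (as recorded above, before Lemma \ref{lem4}); his contracted, averaged kernels are our $\overline{(f\otimes g)}_{\cl{B}(l,p)}$ from \eqref{defBlp}, the two conventions differing only by the explicit averaging constants $|\cl{B}(l,p)|$; and the prefactor $(n/a_n)^{(l+p)/2}n^{-(l-p)/2}$, together with the factorials hidden inside the $J$'s, is precisely what one gets by rewriting every $J_{n,\cdot}$ on both sides of Major's identity in terms of the matching $I_\cdot^{(n)}$.

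For a self-contained derivation, I would first reduce to simple kernels $f\in\cl{S}^{k_1}$, $g\in\cl{S}^{k_2}$ built over a common finite partition $A_1,\dots,A_t\in\cl{E}_0$, and recover the general case at the end by continuity. For simple kernels, one writes $W_n(B)=a_n^{-1/2}\sum_{i=1}^n(\delta_{X_{n,i}}(B)-P_n(B))$, expands the product $I_{k_1}^{(n)}(f)I_{k_2}^{(n)}(g)$ of the two off-diagonal sums, and organizes the terms by which sample indices among the $k_1+k_2$ slots coincide. Within each of the two groups the off-diagonal constraint already forces the indices to be distinct, so the coincidences between the first row and the second row form a partial matching, i.e.\ a diagram $\cl{N}$ in the sense of \eqref{eq:edge set}. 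For each matched pair of slots one contracts the two attached indicator factors; the identity $\delta_X(A_a)\delta_X(A_b)=\delta_X(A_a\cap A_b)$ together with the centering corrections $-P_n(\cdot)$ expresses this contraction as a combination of (i) a single surviving centered increment on $A_a\cap A_b$ --- in which case the edge stays a live variable --- and (ii) deterministic terms whose leading part is $P_n(A_a\cap A_b)$ --- in which case the identified variable is, in effect, integrated against $P_n$. Choosing for each diagram a sub-collection $\cl{N}_1$ of $p$ edges to take branch (ii) produces exactly the colored-diagram kernel $(f\otimes g)_{B(\cl{N},\cl{N}_1)}$, and summing over all diagrams with $|\cl{N}|=l$, $|\cl{N}_1|=p$ gives $|\cl{B}(l,p)|\,\overline{(f\otimes g)}_{\cl{B}(l,p)}$. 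Counting the free sample indices ($k_1+k_2-l$ before the $P_n$-integrations, $k_1+k_2-l-p$ after) against the overall normalization $a_n^{-(k_1+k_2)/2}$ and the powers of $n$ from the index sums then reproduces the prefactor $(n/a_n)^{(l+p)/2}n^{-(l-p)/2}$.

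To pass from simple to arbitrary $f\in L^2(\mu^{k_1})$, $g\in L^2(\mu^{k_2})$ I would invoke $L^2$-continuity of both sides: the left-hand side is handled by a Cauchy--Schwarz inequality together with the (elementary, fixed-$n$) second-moment bounds for the empirical integrals $I_k^{(n)}$; the right-hand side is handled by the contraction inequality \eqref{contraction2} (valid for $\sigma$-finite measures, and, since integration against a probability measure is an $L^2$-contraction, also after the $P_n$-integrations), which shows $\overline{(f\otimes g)}_{\cl{B}(l,p)}\in L^2(P_n^{\cdot})\subseteq L^1(P_n^{\cdot})$ with continuous dependence on $(f,g)$, so the multiple empirical integrals on the right are well defined and vary continuously. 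The one genuinely delicate point is the coefficient bookkeeping inside the simple-kernel step --- keeping track of how the lower-order pieces produced by the contraction of two centered indicators (the $1-2P_n(A_a)$ and $P_n(A_a)^2$ type terms) get reabsorbed so that the coefficients collapse to precisely $|\cl{B}(l,p)|\,(n/a_n)^{(l+p)/2}n^{-(l-p)/2}$. This is exactly the computation performed in \cite{major2005estimate}, which is why, for the present lemma, applying that reference to the fixed-$n$ law $P_n$ and merely verifying the dictionary above is the most economical way to finish.
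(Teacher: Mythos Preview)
Your proposal is correct and matches the paper's treatment: the paper does not give a proof but simply states that the lemma ``is essentially \cite[Lemma 2]{major2005estimate} with an adaptation of the notation,'' which is exactly your primary route of quoting Major after verifying the dictionary $J_{n,k}(f)=\frac{1}{k!}(a_n/n)^{k/2}I_k^{(n)}(f)$. Your additional sketch of a self-contained derivation and the $L^2$-continuity passage to general kernels go beyond what the paper provides, but are consistent with Major's argument and would be acceptable elaborations.
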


To analyze the expectation of each term in  \eqref{eq:prod decomp} using Lemma \ref{lem4}, for $f\in L^2(\mu^{k_1})$ and $g\in L^2(\mu^{k_2})$, $0\le l\le \min(k_1,k_2)$, we define the following bilinear form:
\begin{align}\label{Fnl}
 F^{(n)}_l(f,g)   : =&    \left(\frac{n}{a_n}\right)^{(k_1+k_2) / 2} P_n^{ k_1+k_2-l} \left(\overline{(f \otimes g)}_{\mathcal{B}(l)}\right)
 =&\left(\frac{n}{a_n}\right)^{(k_1+k_2) / 2} P_n^{ k_1+k_2-l-p} \left(\overline{(f \otimes g)}_{\mathcal{B}(l,p)}\right),
\end{align}
where $0\le p\le l$.  Notice that $ F^{(n)}_l(f,g)= F^{(n)}_l(\wt{f},\wt{g})$ because the integration eliminates the effect of variable permutations in $f$ and $g$. 
The following lemma shows that this bilinear form is uniformly bounded in terms of $n$.
{
\begin{lem}\label{Lem:Fnl bounded}
Suppose $f \in L^2( \mu^{k_1})$, $g \in L^2( \mu^{k_2})$, $k_1,k_2\in \bb{N}_+$,  and $0\le l\le \min(k_1,k_2)$.  Then
$ 
    |F_l^{(n)}(f, g)| \leq \| f\|_{L^2(\mu^{k_1})} \| g\|_{L^2(\mu^{k_2})}
$  for any $n\in \bb{N}_+$.
\end{lem}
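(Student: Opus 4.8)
The plan is to estimate the diagrams making up the average $\overline{(f\otimes g)}_{\mathcal{B}(l)}$ in \eqref{abn} one at a time, and then average. Substituting \eqref{abn} into \eqref{Fnl} and using linearity of $P_n^{k_1+k_2-l}(\cdot)$,
\[
F_l^{(n)}(f,g)=(n/a_n)^{(k_1+k_2)/2}\,\frac{1}{|\mathcal{B}(l,k_1,k_2)|}\sum_{B(\mathcal{N})\in\mathcal{B}(l,k_1,k_2)}P_n^{k_1+k_2-l}\big((f\otimes g)_{B(\mathcal{N})}\big),
\]
so by the triangle inequality it suffices to show, for each fixed diagram $B(\mathcal{N})$ with $|\mathcal{N}|=l$, that
\[
(n/a_n)^{(k_1+k_2)/2}\,\big|P_n^{k_1+k_2-l}\big((f\otimes g)_{B(\mathcal{N})}\big)\big|\ \le\ \|f\|_{L^2(\mu^{k_1})}\,\|g\|_{L^2(\mu^{k_2})}.
\]
The two inputs I would use are the Cauchy-Schwartz-type contraction bound \eqref{contraction2} (which, as noted there, remains valid with $P_n$ in place of a $\sigma$-finite measure) and the setwise domination $(n/a_n)P_n\le\mu$ of Assumption \ref{asu2}, lifted to the product measures as $(n/a_n)^{k}P_n^{k}\le\mu^{k}$ on $\mathcal{E}^{k}$ --- precisely the step already invoked in the proof of Lemma \ref{lem5}. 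This product domination gives in particular $P_n^{k_1}(f^2)\le (a_n/n)^{k_1}\mu^{k_1}(f^2)<\infty$ and $P_n^{k_2}(g^2)\le (a_n/n)^{k_2}\mu^{k_2}(g^2)<\infty$, so $f\in L^2(P_n^{k_1})$ and $g\in L^2(P_n^{k_2})$, whence $(f\otimes g)_{B(\mathcal{N})}\in L^2(P_n^{k_1+k_2-l})$ by \eqref{contraction2}, and all the integrals appearing above are well defined.

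For a fixed diagram I would chain three inequalities. Since $P_n^{k_1+k_2-l}$ is a probability measure, Cauchy-Schwartz against the constant function $1$ gives $|P_n^{k_1+k_2-l}((f\otimes g)_{B(\mathcal{N})})|\le \|(f\otimes g)_{B(\mathcal{N})}\|_{L^2(P_n^{k_1+k_2-l})}$; then \eqref{contraction2} bounds the right-hand side by $\|f\|_{L^2(P_n^{k_1})}\,\|g\|_{L^2(P_n^{k_2})}$; and finally the product domination gives $\|f\|_{L^2(P_n^{k_1})}\le (a_n/n)^{k_1/2}\|f\|_{L^2(\mu^{k_1})}$ and the analogous bound for $g$. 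Multiplying these through by the prefactor $(n/a_n)^{(k_1+k_2)/2}$, the powers of $n/a_n$ cancel exactly --- since $(n/a_n)^{(k_1+k_2)/2}(a_n/n)^{k_1/2}(a_n/n)^{k_2/2}=1$ --- leaving the per-diagram bound displayed above. Averaging over the $|\mathcal{B}(l,k_1,k_2)|$ diagrams and using the triangle inequality completes the proof.

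There is no genuine obstacle here; the argument is a short chain of standard estimates, and the only point that deserves care is the exponent bookkeeping. The subtlety is that $F_l^{(n)}$ carries the ``oversized'' factor $(n/a_n)^{(k_1+k_2)/2}$ rather than the $(n/a_n)^{(k_1+k_2-l)/2}$ that one would naively pair with a $(k_1+k_2-l)$-fold integral against $P_n$. It works because the Cauchy-Schwartz contraction \eqref{contraction2} effectively ``un-identifies'' the $l$ pairs of arguments the diagram had collapsed, restoring a full complement of $k_1+k_2$ factors of $P_n$ split between $\|f\|_{L^2(P_n^{k_1})}$ and $\|g\|_{L^2(P_n^{k_2})}$; these supply exactly the $(a_n/n)^{(k_1+k_2)/2}$ needed to absorb the prefactor. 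One should also confirm that $(n/a_n)P_n\le\mu$ does pass to product spaces and to the integrals $P_n^{k}(f^2)$, but this is routine --- it holds on measurable rectangles and extends by a monotone-class argument --- and is already used without further comment elsewhere in the paper.
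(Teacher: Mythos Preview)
Your proof is correct and follows essentially the same approach as the paper's: both reduce to a per-diagram bound, use that the $L^1$ norm is dominated by the $L^2$ norm under the probability measure $P_n^{k_1+k_2-l}$, apply the contraction inequality \eqref{contraction2}, and finish with the product domination $(n/a_n)^k P_n^k\le\mu^k$ from Assumption \ref{asu2}. The only cosmetic difference is that the paper phrases the intermediate step via the colored-diagram representation $\overline{(|f|\otimes|g|)}_{\mathcal{B}(l,l)}$ (using the second equality in \eqref{Fnl} with $p=l$), whereas you work directly with $\overline{(f\otimes g)}_{\mathcal{B}(l)}$; since \eqref{contraction2} identifies the two $L^2$ norms anyway, the arguments are equivalent.
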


\begin{proof}
We have by a triangular inequality that 
\begin{align}
     \ |F_l^{(n)}(f, g)|  &\le  \left(\frac{n}{a_n}\right)^{\left(k_1+k_2\right) / 2} P_n^{ k_1+k_2-2l}\left(\overline{(|f| \otimes |g|)}_{\mathcal{B}(l,l)}\right) \notag\\
    &= \left(\frac{n}{a_n}\right)^{\left(k_1+k_2\right) / 2} \frac{1}{\left|\mathcal{B}\left(l, l\right)\right|} \sum_{B\left(\mathcal{N}, \mathcal{N}\right) \in \mathcal{B}\left(l, l\right)} \| (|f| \otimes |g|)_{B\left(\mathcal{N}_{,}, \mathcal{N}\right)}\|_{L^1\left(P_n^{k_1+k_2-2 l}\right)} \label{cont1} \\
      &\leq \left(\frac{n}{a_n}\right)^{\left(k_1+k_2\right)/2}\|f\|_{L^2\left(P_n^{k_1}\right)} \cdot\|g\|_{L^2\left(P_n^{k_2}\right)} \leq \|f\|_{L^2\left(\mu^{k_1}\right)} \cdot\|g\|_{L^2\left(\mu^{k_2}\right)}. \label{cont3}
\end{align}
From (\ref{cont1}) to (\ref{cont3}), it follows from the fact that the $L^1$ norm is bounded by the $L^2$ norm  under a probability measure, as well as the relation (\ref{contraction2}). The last inequality follows from $(n/a_n) P_n \leq \mu$ imposed in Assumption \ref{asu2}.
\end{proof}

 \begin{lem}\label{lem13}
Suppose $f \in L^2( \mu^{k_1})$, $g \in L^2( \mu^{k_2})$, $k_1,k_2\in \bb{N}_+$.  
 Under either of these assumptions:  (i) $k_1 \neq k_2$, $0\leq l \leq \min \{k_1,k_2 \}$; or (ii)  $k_1 = k_2$ and $0\leq l <k_1$, we have as $n \rightarrow \infty$,
\begin{equation}\label{glem}
 F^{(n)}_l(f,g)  \rightarrow 0,
\end{equation}
where  $F^{(n)}_l(f,g)$ is as in \eqref{Fnl}.
\end{lem}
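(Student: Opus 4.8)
The plan is to reduce to the case of simple integrands, where an explicit decay rate is available, and then pass to general $L^2$ integrands by exploiting the bilinearity of $F_l^{(n)}$ together with the uniform bound of Lemma~\ref{Lem:Fnl bounded}.

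First I would treat simple $f=g_1\in\mathcal{S}^{k_1}$, $g=g_2\in\mathcal{S}^{k_2}$. Writing $M:=k_1+k_2-l$, each contracted function $(g_1\otimes g_2)_{B(\mathcal{N})}$ (obtained by merging $l$ pairs of arguments of a product indicator) is again a simple function on $E^M$ supported on a set of finite $\mu^M$-measure, and hence so is their average $h:=\overline{(g_1\otimes g_2)}_{\mathcal{B}(l)}$; in particular $\mu^M(|h|)<\infty$. Substituting $P_n=(a_n/n)\mu_n$, so that $P_n^M=(a_n/n)^M\mu_n^M$, into \eqref{Fnl} gives
\[
F_l^{(n)}(g_1,g_2)=\left(\frac{a_n}{n}\right)^{(k_1+k_2)/2-l}\mu_n^M(h).
\]
Since $\mu_n\le\mu$ we have $|\mu_n^M(h)|\le\mu^M(|h|)<\infty$ uniformly in $n$, while the exponent $(k_1+k_2)/2-l$ is strictly positive under either hypothesis: indeed $l\le\min(k_1,k_2)\le(k_1+k_2)/2$ with equality only when $k_1=k_2=l$, which (i) and (ii) both exclude, so in fact $(k_1+k_2)/2-l\ge\frac12$. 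As $a_n=o(n)$, the prefactor tends to $0$ and hence $F_l^{(n)}(g_1,g_2)\to0$.

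For general $f\in L^2(\mu^{k_1})$, $g\in L^2(\mu^{k_2})$, fix $\delta>0$ and choose, as in the proof of Lemma~\ref{lem5}, simple $f_\delta,g_\delta$ with $\|f-f_\delta\|_{L^2(\mu^{k_1})}<\delta$ and $\|g-g_\delta\|_{L^2(\mu^{k_2})}<\delta$. Since $(f,g)\mapsto F_l^{(n)}(f,g)$ is bilinear (the contraction-and-average operation and the integral $P_n^M(\cdot)$ are each linear), expanding $F_l^{(n)}(f,g)$ along $f=f_\delta+(f-f_\delta)$ and $g=g_\delta+(g-g_\delta)$ produces four terms; the first, $F_l^{(n)}(f_\delta,g_\delta)$, tends to $0$ by the previous step, and the remaining three are bounded, via Lemma~\ref{Lem:Fnl bounded}, by $(\|f\|_{L^2(\mu^{k_1})}+\delta)\delta$, $\delta(\|g\|_{L^2(\mu^{k_2})}+\delta)$, and $\delta^2$ respectively. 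Taking $\limsup_{n\to\infty}$ and then letting $\delta\downarrow0$ yields $F_l^{(n)}(f,g)\to0$, which is \eqref{glem}.

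The only delicate point is the simple-integrand step. For a general $h\in L^2(\mu^M)$, Lemma~\ref{lem5} only gives $(n/a_n)^{M/2}P_n^M(h)\to0$ with no rate, and that is not enough to cancel the extra factor $(n/a_n)^{l/2}$ that is built into $F_l^{(n)}$ when $l\ge1$; restricting first to simple $h$ is precisely what exposes the sharp rate $(a_n/n)^{(k_1+k_2)/2-l}$, and the strict positivity of that exponent is exactly the content of hypotheses (i) and (ii).
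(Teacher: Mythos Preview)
Your argument is correct and is in fact cleaner than the paper's. Both proofs share the same architecture: establish \eqref{glem} for simple integrands, then use bilinearity and the uniform estimate of Lemma~\ref{Lem:Fnl bounded} to pass to general $L^2$ integrands (your $\delta$-expansion is essentially the paper's Step~3). The difference is in how the simple case is handled. The paper works with elementary indicator blocks $f=\mathbb{I}_{A_{1,1}\times\cdots\times A_{1,k_1}}$, $g=\mathbb{I}_{A_{2,1}\times\cdots\times A_{2,k_2}}$ and performs a combinatorial case analysis (Steps~1a--1c) based on how the sets $A_{1,i}$ and $A_{2,j}$ overlap, eventually arriving at the rate $O((a_n/n)^{(k_1+k_2)/2-l})$. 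You bypass this entirely by substituting $P_n^M=(a_n/n)^M\mu_n^M$ directly into \eqref{Fnl}: the contracted average $h=\overline{(g_1\otimes g_2)}_{\mathcal{B}(l)}$ is bounded with support of finite $\mu^M$-measure, so $|\mu_n^M(h)|\le\mu^M(|h|)<\infty$ uniformly in $n$, and the decay comes solely from the prefactor $(a_n/n)^{(k_1+k_2)/2-l}$. This exposes the same rate with no casework and makes transparent why the strict inequality $l<(k_1+k_2)/2$ encoded in hypotheses (i)--(ii) is exactly what is needed. Your closing remark about Lemma~\ref{lem5} being insufficient on its own is also a useful observation that the paper does not make explicit.
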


\begin{proof}~\\
\textbf{Step 1}. 
Suppose $f=\mathbb{I}_{A_{1, 1} \times \ldots \times A_{1, k_1}}$ and $g=\mathbb{I}_{A_{2, 1} \times \ldots \times A_{2, k_2}}$, where for each $i=1,2$, the collection $\{A_{i,j},\ 1\le j\le k_i\}$ consists of pairwise disjoint subsets in $\cl{E}_0$. We show that \eqref{glem} holds for such a pair $f$ and $g$.  We further divide the discussion into some substeps. 

    \emph{Step 1a}. Suppose $A_{1, 1}, \ldots, A_{1, k_1}, A_{2, 1}, \ldots, A_{2, k_2}$ are all pairwise disjoint. 
    
      When $l \neq 0$, we have the expression $(f \otimes g)_{B(\mathcal{N})}=0$ for any $B(\mathcal{N})\in \cl{B}(l)$  always.  This is clear once we observe that an identification of a pair of variables according to $\cl{N}$ results in the same variable inserted into two indicators of disjoint subsets.   Hence $ F^{(n)}_l(f,g)=0$ in this case.
    
%
    When $l =0$, we have $F_l^{(n)}(f, g) = \left(\frac{n}{a_n}\right)^{(k_1+k_2) / 2} P_n^{ (k_1+k_2)/2}(f \otimes g) \rightarrow 0$ as $n\rightarrow\infty$ by Lemma \ref{lem5}.
    

    \emph{Step 1b}. Now suppose that $\mathcal{G}_1:=\left\{A_{1, 1}, \ldots, A_{1, k_1}\right\}$ and $\mathcal{G}_2:=\left\{A_{2, 1}, \ldots, A_{2, k_2}\right\}$ are pairwise disjoint within each collection. Assume, in addition, that $\mathcal{G}_1$ and $\mathcal{G}_2$ share exactly $q=|\cl{G}_1\cap \cl{G}_2|$ common subsets, $1 \leq q  \leq \min \left\{k_1, k_2\right\}$, with  the subsets in $(\cl{G}_1\cup \cl{G}_2)\setminus (\cl{G}_1\cap \cl{G}_2)$ pairwise disjoint. In other words,  one has $\mathcal{G}_1 \cap \mathcal{G}_2 =:\{A_1, A_2, \ldots, A_q\}$, $\mathcal{G}_1 \cap\left(\mathcal{G}_2\right)^c = : \left\{B_{q+1}, \ldots, B_{k_1}\right\}$ and $ \mathcal{G}_2 \cap\left(\mathcal{G}_1\right)^c=:\left\{B_{k_1+1}, \ldots, B_{k_1+k_2-q}\right\}$, where   $A_i$'s $B_j$'s are all pairwise disjoint.

    If $0 \leq q < l$,  then $(f \otimes g)_{B(\mathcal{N})}=0$ for any $B(\mathcal{N})\in \cl{B}(l)$   as in \emph{Step 1a} above. Observe that when $ 0 \leq l \leq q$, there are at most $\binom{q}{l}$   non-zero $(f \otimes g)_{B(\mathcal{N})}$  as $B(\mathcal{N})$  varies in $\mathcal{B}(l)$.  We can then write (\ref{glem}) as 
$$
\begin{aligned}
 & \frac{\left( n /a_n\right)^{\left(k_1+k_2\right) / 2}}{|\mathcal{B}(l)|}\sum_{\left\{j_1, ., j_l\right\} \subset\{1,2, . ., q\}}  \pp{ \prod_{\alpha=1}^l P_n\left(A_{j_\alpha}\right)}\pp{ \prod_{\beta \in\{1,2, . ., q\} \setminus\left\{j_1, \ldots, j_l\right\}} P_n\left(A_\beta\right)^2} \ \times \\  & \pp{ \prod_{i=q+1}^{k_1+k_2-q} P_n\left(B_i\right)} 
  =  O\left(\left(\frac{a_n}{n}\right)^{\left(k_1+k_2\right) / 2-l} \right),  
\end{aligned}
$$
as $n\rightarrow\infty$, 
where the last asymptotic relation   holds since by Assumption  \ref{asu2}, we have  $P_n(B) = O(\frac{a_n}{n})$ for any $B \in \mathcal{E}_0$. Note that under either case (i) or (ii) in the lemma statement, we have $l<(k_1+k_2)/2$. Hence $ \left(\frac{a_n}{n}\right)^{\left(k_1+k_2\right) / 2-l}   \rightarrow 0$ as $n\rightarrow\infty$ in both cases, recalling $a_n=o(n)$ from Assumption \ref{asu2}. 

\emph{Step 1c}.  We  now return to the  general setup in Step 1:  the subsets $A_{1, 1}, \ldots A_{1, k_1}$ are disjoint and the same relation holds for $A_{2, 1}, \ldots, A_{2, k_2}$, whereas  $A_{1, i} \cap A_{2, j}$ may be nonempty.   After proper partitions of   these subsets, we can decompose $F^{(n)}_l(f,g)$, using the bilinearity of $F^{(n)}_l(\cdot ,\cdot)$, as a finite sum:
$$
F^{(n)}_l(f,g) = \sum_{ i,j   } F^{(n)}_l\left(f_i,g_j\right)
$$
 such that each pair   $f_i, g_j$   belongs to either the setup of \emph{Step 1a} or that of \emph{Step 1b}. We then apply the conclusions of these two cases to each term $F^{(n)}_l\left(f_i,g_j\right)$.

\noindent \textbf{Step 2}.   Suppose  now  $f \in \mathcal{S}^{k_1}$ and $g \in \mathcal{S}^{k_2}$. Following (\ref{simpdef}), we decompose $f$ and $g$ respectively as finite sums $f=\sum_{i} a_i f_{  i}$, $g=\sum_{j}  b_j g_{  j}$, where $a_i$ and $b_j$ are real coefficients, and each pair $f_{i}$, $g_{j}$ is of the form in Step 1.  
Then rewrite $F^{(n)}_l(f, g)$ as  a finite number of terms of the form $a_i b_j F^{(n)}_l(f_{ i},g_{ j})$,  each tending to zero as $n \rightarrow \infty$ by the conclusion of Step 1.

\noindent \textbf{Step 3}. Suppose now $f \in L^2\left( \mu^{k_1}\right)$ and $g \in L^2\left(\mu^{k_2}\right)$. Then there exist  sequences of functions $f_m\in \cl{S}^{k_1}$ and $g_m\in \cl{S}^{k_2}$, $m\in \bb{N}_+$, such that $\left\|f_m-f\right\|_{L^2(\mu^{k_1})} \rightarrow 0$ and $\left\|g_m-g\right\|_{L^2(\mu^{k_2})} \rightarrow 0$, as $m \rightarrow \infty$. The conclusion \eqref{glem} follows once we verify the following:
\begin{equation}\label{eq:stat 1}
\lim _{m \rightarrow \infty} \sup _{n\ge 1} \left|F^{(n)}_l(f,g) - F^{(n)}_l(f_m,g_m)\right|=0
\end{equation}
 and for fixed $m\in \bb{N}_+$, 
\begin{equation}\label{eq:stat 2}
\lim _{n \rightarrow \infty} F^{(n)}_l(f_m, g_m)=0.
\end{equation}
The relation \eqref{eq:stat 2} readily follows from the conclusion of Step 2. 
To show \eqref{eq:stat 1}, using a triangular inequality and Lemma \ref{Lem:Fnl bounded}, we have 
\begin{align}
\left| F^{(n)}_l(f,g) - F^{(n)}_l(f_m,g_m)\right| 
 \leq  &  \left|  F^{(n)}_l(f,g)- F^{(n)}(f,g_m)\right| + \left| F^{(n)}_l(f,g_m) - F^{(n)}_l(f_m,g_m) \right| \notag \\
 \leq  & \|f\|_{L^2( \mu^{k_1})}   \|g-g_m\|_{L^2( \mu^{k_2})} + \|f- f_m\|_{L^2( \mu^{k_1})}   \|g_m\|_{L^2( \mu^{k_2})}.\notag
\end{align}
The last bound does not depend on $n$ and  tends to $0$ as $m\rightarrow\infty$. Hence \eqref{eq:stat 1} holds and the proof is concluded.
\end{proof}
}

\begin{proof}[Proof of Proposition \ref{prop2}] ~  

\emph{Case 1.} When $k_1 \neq k_2$, either $k_1$ or $k_2$ equals zero (recall that a zeroth order integral is understood as a constant),  combining Lemma \ref{lem4} and Lemma \ref{lem5} yields the desired convergence to zero.

\emph{Case 2.} When $k_1 \neq k_2$ and both of them are non-zero, by Lemma  \ref{lem6}, a triangular inequality  and Lemma \ref{lem4}, we have
\begin{align}\label{eq:91}
    \left|\mathbb{E} I_{k_1}^{(n)}(f) I_{k_2}^{(n)}(g)\right|\le & \sum_{l=0}^{\min \left(k_1, k_2\right)} \sum_{p=0}^l\left(\frac{n}{a_n}\right)^{\frac{l+p}{2}} \left|\mathcal{B}\left(l, p \right)\right| n^{-\frac{l-p}{2}} \left|\mathbb{E} I_{k_1+k_2-l-p}^{(n)}\left(\overline{(f \otimes g)}_{\mathcal{B}\left(l, p \right)}\right)\right| \notag\\
     \leq & \sum_{l=0}^{\min \left(k_1, k_2\right)} \sum_{p=0}^l \left|\mathcal{B}\left(l, p \right)\right|  n^{-\frac{l-p}{2}}   \frac{C^{k_1+k_2-l-p} (k_1+k_2-l-p) !  }{(k_1+k_2-l-p)^{(k_1+k_2-l-p) / 2}} \notag\\
    &    \times  \left| \left(\frac{n}{a_n}\right)^{(k_1+k_2) / 2} P_n^{ k_1+k_2 -l -p}\left(\overline{(f \otimes g)}_{\mathcal{B}\left(l, p \right)}\right) \right|.
\end{align}
The expression inside the absolution value in (\ref{eq:91})  is $ F^{(n)}_l(f,g) $ in \eqref{Fnl}, which 
by   Lemma \ref{lem13}, tends to zero as $n\rightarrow
\infty$.

\emph{Case 3.} When $k_1 = k_2 = k$, we first show that as $n\rightarrow\infty$,
    \begin{equation}\label{eq:94}
    \left(\frac{n}{a_n}\right)^k P_n^{ k}\left(\sum_{B(\mathcal{N}) \in \mathcal{B}(k)}(f \otimes g)_{B(\mathcal{N})}\right) \rightarrow  k!\langle \widetilde{f}, \widetilde{g}  \rangle_{L^2\left( \mu^k\right)},
    \end{equation}
where $\wt{f}$ and $\wt{g}$ are symmetrizations defined as in (\ref{eq37}). To do so, observe that
\begin{align*}
   P_n^{ k} \left( \sum_{B(\mathcal{N}) \in \mathcal{B}(k)} (f \otimes g)_{B(\mathcal{N})}\right) &=   P_n^{ k} \left(f(x_1,\ldots,x_k)\sum_{\sigma   }g(x_{\sigma(1)},\ldots,x_{\sigma(k)})\right)\\
    & = P_n^{ k}\left( k! \wt{f}(x_1,..., x_k)\wt{g}(x_1,\ldots,x_k) \right),
\end{align*}
where the sum in the second expression above is over all permutations $\sigma$ of $\{1, \ldots, k\}$.
Note that $\wt{f}\wt{g} \in L^1 (\mu^k)$ due to Cauchy–Schwarz inequality $\| \wt{f}\wt{g}\|_{L^1(\mu^k)} \leq \| \wt{f}\|_{L^2(\mu^k)} \| \wt{g}\|_{L^2(\mu^k)} < \infty$.  Combining this with   Lemma \ref{lem2} yields (\ref{eq:94}).

Next, we have by Lemmas \ref{lem6} and \ref{lem4},
\begin{align}\label{eq:T0}
 \mathbb{E} I_{k}^{(n)}(f) I_{k}^{(n)}(g) & = \sum_{l=0}^{k} \sum_{p=0}^l\left(\frac{n}{a_n}\right)^{\frac{l+p}{2}} \left|\mathcal{B}\left(l, p\right)\right| n^{-\frac{l-p}{2}} \mathbb{E} I_{2k-l-p}^{(n)}\left(\overline{(f \otimes g)}_{\mathcal{B}\left(l, p\right)}\right) \notag\\
& = \sum_{l=0}^{k} \sum_{p=0}^l \, \left|\mathcal{B}(l, p )\right|\, n^{-\frac{l-p}{2}} (2k-l-p)! B_{n, 2k-l-p} \times  T(n,l),
\end{align}
where $T(n,l)=\left(\frac{n}{a_n}\right)^k P_n^{ 2k-l}\left(\overline{(f \otimes g)}_{\mathcal{B}(l)}\right)$.
 By Lemma \ref{lem13}, when  $l <k$, we have $T(n,l) \rightarrow 0$ as $n \rightarrow \infty$. Hence any term of the double summation \eqref{eq:T0} with $l<k$  tends to $0$. On the other hand, a term with $l =k$, $p<l$  also tends to zero. This is  because $n^{-\frac{l-p}{2}} \rightarrow 0$ as $n \rightarrow \infty$, and $T(n,k)$, which is exactly the left-hand side of \eqref{eq:94},  is bounded in $n$.
Therefore, only   the  term with $l= p = k$ contributes as  $n\rightarrow \infty$.  Note that $\left|\mathcal{B}(k, k )\right|=B_{n,0}=1$   in this case. So  in view of \eqref{eq:94} again, this contributing term converges to $k !\langle\widetilde{f}, \widetilde{g}\rangle_{L^2\left(\mu^k\right)}$.
\end{proof}

\subsection{Proofs of limit theorems for Wiener chaos}\label{sec:pf limit}

\begin{lem}\label{thm6}
    For any $h=\left(h^{(k)}\right)_{k=0}^\infty \in  H$, $K\in \bb{N}_0$ fixed, we have 

\begin{equation}\label{eq:joint}
\sum_{k=0}^{K} I_k^{(n)}(h^{(k)}) \Rightarrow \sum_{k=0}^{K} I_k(h^{(k)}).
\end{equation}

as $n\rightarrow\infty$.
\end{lem}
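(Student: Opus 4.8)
The plan is to prove the finite-order statement \eqref{eq:joint} by the classical Cram\'er--Wold device combined with the method of moments. For fixed $K$, the vector $\left(I_0^{(n)}(h^{(0)}), I_1^{(n)}(h^{(1)}), \ldots, I_K^{(n)}(h^{(K)})\right)$ lives in $\bb{R}^{K+1}$, so it suffices to show that for every choice of real coefficients $c_0, \ldots, c_K$, the scalar random variable $S_n := \sum_{k=0}^{K} c_k I_k^{(n)}(h^{(k)})$ converges weakly to $S := \sum_{k=0}^{K} c_k I_k(h^{(k)})$. Since $S$ is an element of a fixed finite sum of Wiener chaoses, its distribution is determined by its moments (the limit law has all moments finite and is moment-determinate, being a polynomial of finite degree in a Gaussian family); hence it is enough to prove that $\E S_n^m \to \E S^m$ for every $m\in \bb{N}_+$. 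Expanding $S_n^m$ multilinearly, this reduces to showing that for every $m$ and every tuple $(k_1,\ldots,k_m)\in\{0,\ldots,K\}^m$,
\begin{equation*}
\E\left[ I_{k_1}^{(n)}(h^{(k_1)}) \cdots I_{k_m}^{(n)}(h^{(k_m)})\right] \longrightarrow \E\left[ I_{k_1}(h^{(k_1)}) \cdots I_{k_m}(h^{(k_m)})\right] \quad\text{as } n\to\infty.
\end{equation*}

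The key mechanism is an iterated application of the diagram formula, Lemma \ref{lem6}. Using Lemma \ref{lem6} repeatedly to collapse the product $I_{k_1}^{(n)}(h^{(k_1)}) I_{k_2}^{(n)}(h^{(k_2)})$ into a sum of single integrals $\left(n/a_n\right)^{(l+p)/2} |\mathcal{B}(l,p)| n^{-(l-p)/2} I_{k_1+k_2-l-p}^{(n)}(\overline{(h^{(k_1)}\otimes h^{(k_2)})}_{\mathcal{B}(l,p)})$, then multiplying by the next factor and collapsing again, and so on, one obtains $\E\left[\prod_j I_{k_j}^{(n)}(h^{(k_j)})\right]$ as a finite sum over nested/complete diagram structures of terms of the form
\begin{equation*}
\left(\text{combinatorial count}\right)\cdot n^{-(\text{total }p\text{-deficit})/2} \cdot \left(\frac{n}{a_n}\right)^{(\sum_j k_j)/2} P_n^{\,r}\!\left(\text{(iterated contraction of the }h^{(k_j)}\text{)}\right)\cdot (\cdots) B_{n,\cdot},
\end{equation*}
where the final integral is evaluated via Lemma \ref{lem4}. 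The same three-regime analysis used in the proof of Proposition \ref{prop2} then applies term by term: (a) terms in which not every edge is ``fully colored'' carry a strictly negative power of $n$ (from the $n^{-(l-p)/2}$ factors) and vanish, using Lemma \ref{Lem:Fnl bounded} for boundedness of the surviving $F^{(n)}$-type factors; (b) terms in which the diagram structure leaves an uncontracted factor of positive chaos order force a $(n/a_n)^{j/2}P_n^j(\cdot)$ block with $j\ge 1$, which tends to $0$ by Lemma \ref{lem5}; (c) only the ``complete pairing'' diagrams survive — those where the indices group into pairs $(k_{j},k_{j'})$ with $k_{j}=k_{j'}$, each pair fully contracted (so $m$ must be even and $\sum k_j$ is doubled exactly), and by Lemma \ref{lem2} together with \eqref{eq:94}-type limits these converge to the corresponding Gaussian moments. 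The resulting limit is precisely the Wick/diagram formula for $\E\left[\prod_j I_{k_j}(h^{(k_j)})\right]$ for multiple Wiener--It\^o integrals (see \eqref{eq38} and its higher-moment extension), which is exactly the moment of $S$. An alternative, and perhaps cleaner, route is to note that Proposition \ref{prop2} already identifies all first- and second-order limits, establish tightness trivially (the sum is a fixed finite-dimensional random vector with uniformly bounded second moments by Proposition \ref{prop2}), and then upgrade to weak convergence by verifying that each $I_k^{(n)}(h^{(k)})$ converges individually and jointly; but since these integrals are not Gaussian for $k\ge 2$, second moments alone do not pin down the law, so the method of moments as above is the honest path.

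A few technical reductions will streamline the argument. First, by linearity and the $L^2(\mu^k)\ni h^{(k)}$ hypothesis, I would first reduce to simple integrands $h^{(k)}\in\mathcal{S}^k$ via an $L^2$-approximation: the map $h\mapsto \sum_{k\le K} c_k I_k^{(n)}(h^{(k)})$ is bounded from $\bigoplus_{k\le K} L^2(\mu^k)$ into $L^2(\bb{P})$ uniformly in $n$ (combine \eqref{eq:mwi 2nd moment}-type bounds with the boundedness furnished by Proposition \ref{prop2}, or directly with Lemma \ref{Lem:Fnl bounded}), and likewise for the limit via \eqref{eq:H norm}; so weak convergence for simple integrands plus a uniform $L^2$-continuity estimate gives the general case by a standard $3\varepsilon$ argument. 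Second, for simple integrands the diagram expansion is a genuinely finite sum with no integrability subtleties, so the regime analysis is entirely bookkeeping. I expect the main obstacle to be purely organizational rather than conceptual: correctly setting up the iterated diagram formula for an $m$-fold product and matching the surviving terms with the known combinatorial description of joint moments of Wiener--It\^o integrals — essentially checking that the ``only complete pairings survive'' phenomenon from Proposition \ref{prop2} propagates verbatim to all orders, and that the combinatorial constants $|\mathcal{B}(l,p)|$, $B_{n,\cdot}$ and symmetrization factors assemble into exactly the Gaussian moment. No genuinely new estimate beyond Lemmas \ref{lem2}, \ref{lem4}, \ref{lem5}, \ref{Lem:Fnl bounded}, \ref{lem13} and Lemma \ref{lem6} should be needed.
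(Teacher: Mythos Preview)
Your method-of-moments program has a genuine gap in regime (c). You assert that the only surviving contributions to $\E\big[\prod_{j=1}^m I_{k_j}^{(n)}(h^{(k_j)})\big]$ come from grouping the $m$ factors into pairs with $k_j=k_{j'}$, each pair fully contracted, so that in particular $m$ must be even. That is the Wick formula for \emph{Gaussians}, i.e.\ for first-order integrals only; it is false for higher chaos. For example $\E[I_2(f)^3]$ is generically nonzero, and so is $\E[I_1(f)I_1(g)I_2(h)]$. The correct higher-moment formula for multiple Wiener--It\^o integrals is a sum over all \emph{complete diagrams} on $m$ rows of sizes $k_1,\ldots,k_m$ (every vertex in exactly one edge, edges joining distinct rows), not merely over pairwise matchings of the rows themselves. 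Your iterated empirical diagram expansion would therefore have to be matched against this richer combinatorial target, and the three-regime analysis as you state it does not accomplish that. The route may be salvageable, but the matching of constants to the Wiener--It\^o diagram formula is a real task, not ``purely organizational.''

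The paper sidesteps all of this by a much shorter argument that you almost reached and then dismissed. It first proves the multivariate CLT $\big(W_n(B_1),\ldots,W_n(B_m)\big)\Rightarrow\big(W(B_1),\ldots,W(B_m)\big)$ for $B_i\in\cl{E}_0$ via Lindeberg--Feller. For \emph{simple} integrands $h_j^{(k)}\in\cl{S}^k$, both $I_k^{(n)}(h_j^{(k)})$ and $I_k(h_j^{(k)})$ are the \emph{same polynomial} in the values $W_n(B_i)$, respectively $W(B_i)$, so the continuous mapping theorem gives \eqref{eq:joint} at once for simple $h$. The passage to general $h^{(k)}\in L^2(\mu^k)$ is exactly the $L^2$-approximation step you describe, using Proposition \ref{prop2} only to control $\limsup_n\|Y_n(h_j)-Y_n(h)\|_{L^2(\bb{P})}$ in a Billingsley-type triangular argument. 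No moment computation beyond second order is needed, and the non-Gaussianity of $I_k$ for $k\ge 2$ is irrelevant because the law is identified by continuous mapping, not by moments.
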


\begin{proof}
We first show that for any $B_i \in \mathcal{E}_0, i=1, \ldots m$, and $m \in \mathbb{N}_{+}$, as $n \rightarrow \infty$,
\begin{equation}\label{eq: conv for W}
\left(W_n\left(B_1\right), \ldots, W_n\left(B_m\right)\right) \Rightarrow\left(W\left(B_1\right), \ldots, W\left(B_m\right)\right).
\end{equation}
Indeed, by Cramér-Wold theorem, it is enough to show that as $n \rightarrow \infty$,
\begin{equation}\label{eq normal}
\frac{1}{\sqrt{a_n}} \sum_{j=1}^n\left[g\left(X_{n, j}\right)-P_n(g)\right] \Rightarrow N\left(0, \mu\left(g^2\right)\right) \stackrel{d}{=} \int_E g(x) W(d x)
\end{equation}
where $g(x)=\sum_{k=1}^n a_k \mathbb{I}_{B_k}(x)$,with $a_k \in \mathbb{R}$, $k=1, \ldots, n$.
If $g=0 \,\,\mu$-almost everywhere, the relation (\ref{eq normal}) holds trivially.
Now, we suppose $\mu\{g \neq 0\}>0$. Assumption \ref{asu2} implies that $P_n\left(g^2\right)\sim (a_n/n) \mu (g^2)$ and  $P_n(g)^2\sim (a_n/n)^2 \mu(g)^2$, and hence
the variance of the left-hand side of \eqref{eq normal}  is  $(n/a_n)\left(P_n\left(g^2\right)-P_n(g)^2\right) \rightarrow  \mu\left(g^2\right)$ as $n\rightarrow\infty$. Note that $g$ is bounded and $a_n\rightarrow\infty$. Then (\ref{eq normal})  follows from the Lindeberg-Feller central limit theorem (e.g., \cite[Theorem 6.13]{kallenberg2021foundations}).

Next, write the left-hand side and the right-hand side of \eqref{eq:joint} respectively as 
$Y_n(h)$ and $Y(h)$. Note that both are linear in $h$.  
Now suppose that for each $k=1,\ldots,K$, a sequence $h_j^{(k)}\in \cl{S}^{k}$, $j  \in \bb{N}_+$ satisfies
\begin{equation}\label{eq:h_j approx}
    \left\|h_j^{(k)}-h^{(k)}\right\|_{L^2(\mu^k)}\rightarrow 0
\end{equation} as $j\rightarrow\infty$.    Write $h_j=(h_j^{(0)},h_j^{(1)},\ldots,h_j^{(K)},0,\ldots)\in H$.  
Observe that each $I_k^{(n)}(h_j^{(k)})$ is a polynomial of $W_n(B_i)$'s for some $B_i\in \cl{E}_0$, $i=1,\ldots,m$, $m\in \bb{N}_+$, and each $I_k(h_j^{(k)})$ is the same polynomial of $W(B_i)$'s, $k=1,\ldots,K$. Recall also that the zeroth order integrals are understood as the same constant. 
Then by \eqref{eq: conv for W} and the continuous mapping theorem, we have,
for each fixed $j$, as $n\rightarrow\infty$.
$
Y_n(h_j)\Rightarrow Y(h_j).
$
On the other hand, by  the relations \eqref{eq38} and \eqref{eq:mwi 2nd moment}, we have as $j\rightarrow\infty$ that 
\[
\|Y(h_j) - Y(h)\|_{L^2(\bb{P})} ^2=\|Y(h_j- h)\|_{L^2(\bb{P})}^2 = \sum_{k=0}^K   k!  \left\|   \wt{h}_j^{(k)}-\wt{h}^{(k)} \right\|_{L^2(\mu^k)}^2 \rightarrow 0.
\] 
Therefore, in view of a standard triangular approximation argument (e.g., \cite[Theorem 3.2]{billingsley1968convergence}), it suffices to show  
\begin{equation}\label{eq:tri approx}
\lim_{j\rightarrow\infty}\limsup_{n\rightarrow\infty}   \| Y_n(h_j)- Y_n(h) \|_{L^2(\bb{P})} = 0.
\end{equation}
Indeed, by a triangular inequality, Proposition \ref{prop2} and  \eqref{eq:mwi 2nd moment}, we have
\begin{align*}
\limsup_n\| Y_n(h_j)- Y_n(h) \|_{L^2(\bb{P})}\le \sum_{k=0}^K   \limsup_n \left\| I_k^{(n)}(h_j^{(k)}-h^{(k)}) \right\|_{L^2(\bb{P})} =  \sum_{k=0}^K   \sqrt{k!}  \left\|   \wt{h}_j^{(k)}-\wt{h}^{(k)} \right\|_{L^2(\mu^k)}.
\end{align*}
Then \eqref{eq:tri approx} follows from \eqref{eq:h_j approx}  and \eqref{eq:mwi 2nd moment} by letting $j\rightarrow\infty$ in the last bound above.

\end{proof}

\begin{proof}[Proof of Theorem \ref{thm7} ]
    Let \[
    R_{m,n} = \left\| \sum_{m < k \leq K_n} I_k^{(n)}(h^{(k)}) \right\|^2_{L^2(\bb{P})},\quad m,n\in \bb{N}_+.
    \]  First observe that $\sum_{k=0}^m I_k(h^{(k)})\rightarrow \sum_{k=0}^\infty I_k(h^{(k)})$  in $L^2(\bb{P})$ as $m\rightarrow\infty$ in view of \eqref{eq38}, \eqref{eq:mwi 2nd moment} and \eqref{eq:H norm}.  Combining this with Lemma \ref{thm6} and a triangular approximation argument, it suffices to show that there exists $  K_n \nearrow \infty$ as $n\rightarrow\infty$, so that
    \begin{equation}\label{eq:Rmn tri approx}
        \lim_{m\rightarrow \infty} \limsup_{n\rightarrow\infty} R_{m,n} =0.
    \end{equation}

Let $\gamma_n(k_1,k_2)  = \mathbb{E}[I^{(n)}(h^{(k_1)})I^{(n)}(h^{(k_2)})]$. 
Suppose two sequences $\{b_K\}_{K\in \bb{N}_+}$  and $\{s_K\}_{K\in \bb{N}_+}$ satisfy $b_K = o(K^{-2})$ and $s_K = o(K^{-1})$ as $K\rightarrow\infty$.
By Proposition \ref{prop2},  for any $ K \in \mathbb{Z}_{+}$, there exists  $M(K)\in \bb{N}_+$,  such that  whenever $ n \geq M(K)$, 
\begin{align}
   & \sup_{1 \leq k_1<k_2 \leq K} \left|\gamma_n(k_1,k_2)\right| \leq b_K,
   \label{eq:supdiff}\\
   & \sup_{1 \leq k \leq N} \left|\gamma_n(k,k) - k! \| \wt{h}^{(k)}\|^2_{L^2(\mu^k)} \right| \leq s_K.
   \label{eq:supsame}
\end{align}
Assume without loss of generality, $M(0) = 1 < M(1) < M(2)< M(3)\ldots$. Consider $M(K)$ as a function of $K$ and define $K_n : = \max \{m \geq 0: n \geq M(m) \}$.   Then $K_n \nearrow \infty$ as $n \rightarrow \infty$.
 We have by (\ref{eq:supdiff}) and (\ref{eq:supsame}) that
\begin{align*}
    R_{m,n} & = \sum_{m <k_1,k_2 \leq K_n} \gamma_n(k_1,k_2)  \leq  \sum_{m < k \leq K_n} \gamma_n(k,k) + 2\sum_{m \leq k_1<k_2 \leq K_n} \left|\gamma_n(k_1,k_2)\right|\\
    & \leq \sum_{m \leq k \leq K_n} \left[ k! \|\wt{h}^{(k)}\|_{L^2(\mu^k)}^2 + s_{K_n} \right] + 2\sum_{m \leq k_1<k_2 \leq K_n} b_{K_n}\\
    & \leq \sum_{m \leq k \leq K_n} k! \|\wt{h}^{(k)}\|_{L^2(\mu^k)}^2 + s_{K_n} K_n + b_{K_n}K_n^2 = : S_{m,n}.
\end{align*}
Note that by the restrictions on $b_K$ and $s_K$ and \eqref{eq:H norm}, we have
$
    \lim_{m} \limsup_{n } S_{m,n} = 0
$.
Hence the desirable relation \eqref{eq:Rmn tri approx} follows. 
\end{proof}

\begin{proof}[Proof of Proposition \ref{prop4}]
We shall develop a more precise estimate of the left-hand side of (\ref{eq:supdiff}) and (\ref{eq:supsame}). 
We will use the  following   Stirling-type estimate  $ C_1 m^{m+1/2} / e^{m}\le m!\le C_2 m^{m+1/2} / e^{m}$, $m\in \bb{N}_+$, $0<C_1<C_2$, in several places below. First, in view of \eqref{eq:91}, we have for $k_1,k_2\in \bb{N}_+$,  $k_1<k_2$, that 
 \begin{align}
  \left|\gamma_n\left(k_1, k_2\right)\right|   
\leq     \sum_{l=0}^{k_1}  \sum_{p=0}^l  &  n^{-\frac{l-p}{2}}  \left| \mathcal{B}\left(l, p, k_1, k_2\right)\right| \ \times \frac{C^{k_1+k_2-l-p}\left(k_1+k_2-l-p\right) !}{\left(k_1+k_2-l-p\right)^{\left(k_1+k_2-l-p\right) / 2}}   \left|F_l^{(n)}(h^{(k_1)}, h^{(k_2)})\right|.\label{eq42}
 \end{align}
 Observe   
 for $0 \le l \le p\le \min(k_1,k_2)$, $1\le k_1,k_2\le K$,  that
\begin{equation}\label{eq:B(l,p) bound}
\left| \cl{B} \left(l, p, k_1, k_2\right) \right| =   \binom{k_1}{l}\binom{k_2}{l}\binom{l}{p} l!   \leq 2^{k_1+k_2+l} l!  \le C_0^{K} K^{K},
\end{equation}
and  
\begin{equation}\label{eq:coef bound}
\frac{C^{k_1+k_2-l-p}\left(k_1+k_2-l-p\right) !}{\left(k_1+k_2-l-p\right)^{\left(k_1+k_2-l-p\right) / 2}}  \le C_0^K K^{K}.
\end{equation}
 
On the other hand, recalling $\mu_n=(n/a_n)P_n\le \mu$ under Assumption \ref{asu2}, we have for $0\le l\le k_1<k_2\le K$ that 
\begin{align*}
     &\big|  F^{(n)}_l({h}^{(k_1)}, {h}^{(k_2)}) \big|=\big|  F^{(n)}_l(\wt{h}^{(k_1)},\wt{h}^{(k_2)}) \big| \\
     & \leq    \left( \frac{a_n}{n}\right)^{(k_1+k_2)/2 -l} 
 \frac{1}{|\mathcal{B}(l,k_1,k_2)|} \, \mu_n^{ k_1+k_2-l} \left( \sum_{B(\mathcal{N}) \in \mathcal{B}(l,k_1,k_2)} \left|  {\left(\wt{h}^{(k_1)} \otimes \wt{h}^{(k_2)} \right) }_{B(\mathcal{N})} \right| \right)  \\
  \leq   & \left(\frac{a_n}{n}\right)^{\left(k_1+k_2\right) / 2-l}  \frac{1}{\left|\mathcal{B}\left(l, k_1, k_2\right)\right|} \sum_{B(\mathcal{N}) \in \mathcal{B}(l,k_1,k_2)} \left\| {\left(\wt{h}^{(k_1)} \otimes \wt{h}^{(k_2)} \right) }_{B(\mathcal{N})}\right\|_{L^1(\mu^{k_1+k_2-l})}.
\end{align*}
Note that  by the assumption that $\operatorname{supp}(h^{(k)}) \subset E_{n_0}^k$ for all $k\in \bb{N}_+$, the Cauchy–Schwarz inequality and \eqref{contraction2} (with $P_n$ replaced by $\mu$), we have 
\begin{align}
 \left\| {\left(\wt{h}^{(k_1)} \otimes \wt{h}^{(k_2)} \right) }_{B(\mathcal{N})}\right\|_{L^1(\mu^{k_1+k_2-l})} 
\le &     \left\| {\left(\wt{h}^{(k_1)} \otimes \wt{h}^{(k_2)} \right) }_{B(\mathcal{N})}\right\|_{L^2(\mu^{k_1+k_2-l})}   \mu(E_{n_0})^{(k_1+k_2-l)/2}\notag \\ \le & \left\|  \wt{h}^{(k_1)} \right\|_{L^2(\mu^{k_1})}   \left\| \wt{h}^{(k_2)}\right\|_{L^2(\mu^{k_1})}   \mu(E_{n_0})^{(k_1+k_2-l)/2}
\leq   \frac{C_1 \mu(E_{n_0})^{(k_1+k_2-l)/2}}{\sqrt{k_1!k_2!}},\notag
\end{align}
where in the last inequality we used the assumption on $H$ in Definition \ref{defnofH}.  Therefore for  $0\le l\le k_1<k_2\le K$, under which $(k_1+k_2)/2-l\ge 1/2$, we have
\begin{align}
  &\left|  F^{(n)}_l(h_{k_1},h_{k_2}) \right|\le  C_1 \left( \frac{a_n}{n}\right)^{(k_1+k_2)/2 -l}  \frac{ \mu(E_{n_0})^{(k_1+k_2-l)/2}}{\sqrt{k_1!k_2!}}\le C_2 \pp{\frac{a_n}{n}}^{1/2}.\label{eq:h contra bound}
\end{align}
Combining \eqref{eq:B(l,p) bound}, \eqref{eq:coef bound} and \eqref{eq:h contra bound} into \eqref{eq42}, we derive
\begin{align}
   \sup _{1 \leq k_1<k_2 \leq K}\left|\gamma_n\left(k_1, k_2\right)\right|  
  \leq  C^K  K^{2K} \left(\frac{a_n}{n}\right)^{1/2}. \label{eq32}
\end{align}

Next in view of \eqref{eq:T0}, we can write for $k\in \bb{N}_+$ that
\begin{align}
  &\gamma_n(k, k)-k !\|\wt{h}^{(k)}\|_{L^2\left( \mu^k\right)}^2 \notag\\  = &    \sum_{ \substack{ 0\leq l < k \\0\leq  p \leq l}}    n^{-\frac{l-p}{2}} |\mathcal{B}(l, p, k, k)|(2 k-l-p) ! B_{n, 2 k-l-p}  F_l^{(n)}\left(h^{(k)}, h^{(k)}\right) \notag\\
& +  \sum_{p=0}^{k-1}   n^{-(k-p) / 2} |\mathcal{B}(k, p, k, k)|  (k-p)! B_{n, k-p} F_k^{(n)}\left(h^{(k)}, h^{(k)}\right)\notag   + \left( F_k^{(n)}\left(h^{(k)}, h^{(k)}\right) - k !\|\wt{h}^{(k)}\|_{L^2\left( \mu^k\right)}^2 \right)  \\
&=:  \mathrm{I}_n(k)+ \mathrm{II}_n(k)+ \mathrm{III}_n(k).\label{eq:I+II+III}
\end{align}
 Similarly as \eqref{eq:h contra bound} we have
$
\sup_{1 \leq k \leq K} \sup _{0 \leq l < k} \left|F_l^{(n)}(h^{(k)}, h^{(k)})\right|\le C \left(\frac{a_n}{n}\right)^{1 / 2}.
$
This together with \eqref{eq:B(l,p) bound},   \eqref{eq:B_n,k bound} (assume  $n\ge K/2$ without loss of generality) and \eqref{eq:coef bound}    implies 
\begin{equation}\label{eq:I}
\sup_{1\le k\le K}|\mathrm{I}_n(k)|\le  C^K K^{2K}  \left(\frac{a_n}{n}\right)^{1 / 2}.
\end{equation}
For   $\mathrm{II}_n(k)$ in \eqref{eq:I+II+III},  first note that by \eqref{eq:B_n,k bound}, for $0\le p<k\le K$, we have
$
(k-p)!B_{n,k-p}\le C^K K^{K/2}.     
$
Combining this with  $n^{-(k-p)/2}\le n^{-1/2}$,    applying  Lemma \ref{Lem:Fnl bounded} for the term $F_k^{(n)}\left(h^{(k)}, h^{(k)}\right)$, as well as  the relation  \eqref{eq:B(l,p) bound} for the term $|\mathcal{B}(k,p,k,k)|$,
we have
\begin{equation}\label{eq:II}
\sup_{1\le k\le K}|\mathrm{II}_n(k)|\le  C^K K^{(3/2)K}  \left(\frac{ 1}{n}\right)^{1 / 2}.
\end{equation}
Since $\operatorname{supp}(h^{(k)}) \subset E_{n}^k$ for all $k\in \bb{N}_+$, $n\ge n_0$,   based on the setup of Example \ref{exam1} we see that
\begin{equation}\label{eq:III}
\mathrm{III}_n(k)=0,\quad n\ge n_0, \ k\ge 1.
\end{equation}
Combining \eqref{eq:I} \eqref{eq:II} and \eqref{eq:III} to \eqref{eq:I+II+III}, we derive that for $n\ge n_0$, 
\begin{equation}\label{eq:supsame est}
\sup_{1\le k\le K} \left|\gamma_n(k, k)-k !\| \wt{h}^{(k)}\|_{L^2\left( \mu^k\right)}^2\right| \le  C^K K^{2K}  \left(\frac{a_n}{n}\right)^{1 / 2} +  C^K K^{(3/2)K}  \left(\frac{ 1}{n}\right)^{1 / 2}.
\end{equation}

Finally,  to satisfy the restrictions $b_K = o(K^{-2})$ and $s_K = o(K^{-1})$ in 
   \eqref{eq:supdiff} and \eqref{eq:supsame} respectively,  in view of  \eqref{eq32}  and \eqref{eq:supsame est}, it suffices for $K=K_n$ to satisfy 
\[
K \ln C  + 2K \ln( K )  + 2\ln(K) - \frac{1}{2} \ln\left(\frac{n}{a_n}\right) \rightarrow -\infty
 \]
 and
 \[  
 K\ln C + \frac{3}{2}K \ln (K) +\ln(K)  - \frac{1}{2}\ln(n)   \rightarrow-\infty 
\]
as $K\rightarrow\infty$.  With the bound $\ln(K)\le CK^{\epsilon_0}$ for any $\epsilon_0>0$, $K\ge 1$, it can be  verified that the rate claimed in the proposition satisfies the relations above.
\end{proof}

\end{document}